\newdefinition{definition}{Definition}
\newdefinition{remark}{Remark}
\newtheorem{theorem}{Theorem}
\newtheorem{lemma}{Lemma}
\newtheorem{corollary}{Corollary}
\newtheorem{proposition}{Proposition}
\newproof{proof}{Proof}
\journal{Advances in Mathematics}
\begin{document}

\begin{frontmatter}



\title{A new family of semifields with 2 parameters}


\author[ovgu,nudt]{Yue Zhou}
\ead{yue.zhou@st.ovgu.de}

\author[ovgu]{Alexander Pott\corref{cor1}}
\ead{alexander.pott@ovgu.de}
\cortext[cor1]{Corresponding author}

\address[ovgu]{Faculty of Mathematics, Otto-von-Guericke-University
Magdeburg, \\39106 Magdeburg, Germany}

\address[nudt]{Department of Mathematics and System Sciences, Science College,
National University of Defense Technology,
Changsha, 410073, P.R.China}

\begin{abstract}
    A new family of commutative semifields with two parameters is presented. Its left and middle nucleus are both determined. Furthermore, we prove that for any different pairs of parameters, these semifields are not isotopic. It is also shown that, for some special parameters, one semifield in this family can lead to two inequivalent planar functions. Finally, using similar construction, new APN functions are given.
\end{abstract}

\begin{keyword}
commutative semifield \sep isotopism \sep planar function \sep projective plane

\MSC[2010] 12K10 \sep 51A35 \sep 51A40
\end{keyword}

\end{frontmatter}



\section{Introduction}
A \emph{semifield} $\mathbb{S}$ is an algebraic structure satisfying all the axioms of a skewfield except (possibly) associativity. In other words, it satisfies the following axioms:
    \begin{itemize}
      \item $(\mathbb{S},+)$ is a group, with identity element $0$;
      \item $(\mathbb{S}\setminus\{0\},*)$ is a quasigroup;
      \item $0*a=a*0=0$ for all $a$;
      \item The left and right distributive laws hold, namely for any $a,b,c\in\mathbb{S}$,
      $$(a+b)*c=a*c+b*c,$$
      $$a*(b+c)=a*b+a*c;$$
      \item There is an element $e\in \mathbb{S}$ such that $e*x=x*e=x$ for all $x\in \mathbb{S}$.
    \end{itemize}
A finite field is a trivial example of a semifield. Furthermore, if $\mathbb{S}$ does not necessarily have a multiplicative identity, then it is called a \emph{presemifield}. A semifield is not necessarily commutative or associative. However, by Wedderburn's Theorem \cite{Wedderburn1905}, in the finite case, associativity implies commutativity. Therefore, a non-associative finite commutative semifield is the closest algebraic structure to a finite field.

In the earlier literature, semifields were also called \emph{division rings} or \emph{distributive quasifields}. The study of semifields was initiated by Dickson, see \cite{Dickson1906}, shortly after the classification of finite fields. Until now, semifields have become an attracting topic in many different areas of mathematics, such as difference sets, coding theory and finite geometry.

Dickson constructed the first non-trivial semifields in \cite{Dickson1906}. In \cite{Knuth1963}, Knuth showed that the additive group of a semifield $\mathbb{S}$ is an elementary abelian group, and the additive order of the nonzero elements in $\mathbb{S}$ is called the characteristic of $\mathbb{S}$. Hence, any finite semifield can be represented by $(\mathbb{F}_{p^n}, +, *)$. Here $(\mathbb{F}_{p^n}, +)$ is the additive group of the finite field $\mathbb{F}_{p^n}$ and $x*y=\varphi(x,y)$, where $\varphi$ is a mapping from $\mathbb{F}_{p^n}\times \mathbb{F}_{p^n}$ to $\mathbb{F}_{p^n}$.

On the other hand, there is a well-known correspondence, via coordinatisation, between semifields and projective planes of Lenz-Barlotti type V.1, see \cite{HughesPiper}. In \cite{Albert1960}, Albert showed that two semifields coordinatise isomorphic planes if and only if they are isotopic (isotopism can also be defined between presemifields):
\begin{definition}
    Let $\mathbb{S}_1=(\mathbb{F}_{p^n}, +, *)$ and $\mathbb{S}_2=(\mathbb{F}_{p^n}, +, \star)$ be two presemifields. If there exist three bijective linear mapping $L, M, N:\mathbb{F}_{p}^n\rightarrow \mathbb{F}_{p}^n$ such that
    $$M(x)\star N(y)=L(x*y)$$
    for any $x,y\in\mathbb{F}_{p^n}$, then $\mathbb{S}_1$ and $\mathbb{S}_2$ are called \emph{isotopic}, and the triple $(M,N,L)$ is an \emph{isotopism } between $\mathbb{S}_1$ and $\mathbb{S}_2$. Furthermore, if there exists an isotopism of the form $(N, N, L)$ between $\mathbb{S}_1$ and $\mathbb{S}_2$, then $\mathbb{S}_1$ and $\mathbb{S}_2$ are \emph{strongly isotopic}.
\end{definition}

Let $\mathbb{P}=(\mathbb{F}_{p^n}, +, *)$ be a presemifield, and $a\in \mathbb{P}$. If we define a new multiplication $\star$ by the rule
    $$(x*a)\star(a*y)=x*y,$$
we have $(a*a)\star(a*x)=a*x$ and $(x*a)\star(a*a)=x*a$, namely $(\mathbb{F}_{p^n}, +, \star)$ is a semifield with unit $a*a$. There are many semifields associated with a presemifield, but they are all isotopic.

Let $\mathbb{S}=(\mathbb{F}_{p^n},+,*)$ be a semifield. The subsets
\begin{equation*}
\begin{aligned}
  N_l(\mathbb{S})=\{a\in \mathbb{S}: (a*x)*y=a*(x*y) \text{ for all }x,y\in \mathbb{S}\},\\
  N_m(\mathbb{S})=\{a\in \mathbb{S}: (x*a)*y=x*(a*y) \text{ for all }x,y\in \mathbb{S}\},\\
  N_r(\mathbb{S})=\{a\in \mathbb{S}: (x*y)*a=x*(y*a) \text{ for all }x,y\in \mathbb{S}\},
\end{aligned}
\end{equation*}
are called the \emph{left, middle} and \emph{right nucleus} of $\mathbb{S}$, respectively. It is easy to check that these sets are finite fields. The subset $N(\mathbb{S})=N_l(\mathbb{S})\cap N_m(\mathbb{S}) \cap N_r(\mathbb{S})$ is called the \emph{nucleus} of $\mathbb{S}$ (In some papers, not $N(\mathbb{S})$ but $N_m(\mathbb{S})$ is called the \emph{nucleus} of $\mathbb{S}$). It is easy to see, if $\mathbb{S}$ is commutative, then $N_l(\mathbb{S})=N_r(\mathbb{S})$ and $N_l(\mathbb{S})\subseteq N_m(\mathbb{S})$, therefore $N_l(\mathbb{S})=N_r(\mathbb{S})=N(\mathbb{S})$ . In \cite{HughesPiper}, geometric interpretation of these nuclei is discussed.

Next, we give the definition of planar functions, which was introduced by Dembowski and Ostrom in \cite{DO1968} to describe affine planes possessing a collineation group with specific properties.
\begin{definition}
    Let $p$ be an odd prime. A function $f: \mathbb{F}_{p^n}\rightarrow\mathbb{F}_{p^n}$ is called a \emph{planar function}, or \emph{perfect nonlinear (PN)}, if for each $a\in \mathbb{F}_{p^n}^*$, $f(x+a)-f(x)$ is a bijection on $\mathbb{F}_{p^n}$.
\end{definition}

For $p=2$, if $x_0$ is a solution of $f(x+a)-f(x)=b$, then $x_0+a$ is another one, hence there do not exist planar functions over $\mathbb{F}_{2^n}$. A function $f: \mathbb{F}_{2^n}\rightarrow\mathbb{F}_{2^n}$ is called \emph{almost perfect nonlinear (APN)}, if for each pair of $a,b\in \mathbb{F}_{p^n}$ where $a\neq 0$,  $\#\{x\mid f(x+a)-f(x)=b\}=2$. APN functions have important applications in cryptography, for recent surveys, see \cite{Carlet_Vect_Book} and \cite{EdelPott2009}.

A \emph{Dembowski-Ostrom} (DO) polynomial $D\in \mathbb{F}_{p^n}[x] $ is a polynomial
$$D(x)=\sum_{i,j}a_{ij}x^{p^i+p^j}\enspace .$$
Obviously, $D(0)=0$ and $D(x+a)-D(x)-D(a)$ is a linearized polynomial for any nonzero $a$ (see \cite{LidlNiederreiter} for linearized polynomials over finite fields). It can be proved that a planar DO polynomial is equivalent to a commutative presemifield with odd characteristic, see \cite{Coulter2008}. In fact, if $*$ is the presemifield product, then the corresponding planar function is $f(x)=x*x$; when the planar DO polynomial $f$ is given, then the corresponding presemifield product can be defined as
\begin{equation}\label{eq_PN->production}
    x*y=\frac{1}{2}(f(x+y)-f(x)-f(y))\enspace .
\end{equation}
It is remarkable that all the known planar functions are DO polynomials, except for the one found by Coulter and Matthews in \cite{Coulter-Matthews1997}, which defines some planes of Lenz-Barlotti class II, but not a semifield plane.

A function from a finite field $\mathbb{F}_{p^n}$ to itself is \emph{affine}, if it is defined by the sum of a constant and a linearized polynomial over $\mathbb{F}_{p^n}$. There are several equivalence relations of functions for which the \emph{planar} property is invariant:
\begin{definition}
    Two functions $f$ and $g: \mathbb{F}_{p^n}\rightarrow \mathbb{F}_{p^n}$ are called
    \begin{itemize}
      \item \emph{extended affine equivalent} (EA-equivalent), if $g = l_1 \circ f \circ l_2+l_3$, where $l_1, l_2$ and $l_3$ are affine functions, and where $l_1,l_2$ are permutations of $\mathbb{F}_{p^n}$. Furthermore, if $l_3$ is the zero mapping, then $f$ and $g$ are called \emph{affine equivalent}; if $l_1$ and $l_2$ are both linear, and $l_3$ is the zero mapping, then $f$ and $g$ are called \emph{linear equivalent};
      \item \emph{Carlet-Charpin-Zinoviev equivalent} (CCZ-equivalent or graph equivalent), if there is some affine permutation $L$ of $\mathbb{F}_{p}^{2n}$, such that $L(G_f)=G_g$, where $G_f=\{(x,f(x)):x\in\mathbb{F}_{p^n})\}$ and $G_g=\{(x,g(x)):x\in\mathbb{F}_{p^n})\}$.
    \end{itemize}
\end{definition}

Generally speaking, EA-equivalence implies CCZ-equivalence, but not vice versa, see \cite{Budaghyan06}. However, if planar functions $f$ and $g$ are CCZ-equivalent, then they are also EA-equivalent \cite{B-H2008,EA=CCZ_PN2008}. Moreover, it is easy to prove that:
\begin{lemma}
    Let $f$ and $g$ be both planar DO functions from $\mathbb{F}_{p^n}$ to $\mathbb{F}_{p^n}$. Then $f$ and $g$ are EA-equivalent if and only if $f$ and $g$ are linear equivalent.
\end{lemma}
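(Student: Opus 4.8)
The plan is to show the two directions separately; the nontrivial one is that EA-equivalence forces linear equivalence, since linear equivalence is by definition a special case of EA-equivalence. So suppose $f$ and $g$ are planar DO functions with $g = l_1 \circ f \circ l_2 + l_3$, where $l_1, l_2$ are affine permutations and $l_3$ is affine. Writing $l_1(x) = L_1(x) + c_1$, $l_2(x) = L_2(x) + c_2$ with $L_1, L_2$ linear bijections, and $l_3(x) = L_3(x) + c_3$ with $L_3$ linear, I would expand $g(x) = L_1\bigl(f(L_2(x) + c_2)\bigr) + c_1 + L_3(x) + c_3$ and compare homogeneous parts. The key structural fact is that a DO polynomial $D$ is \emph{homogeneous of algebraic degree $2$}: $D(x+a) = D(x) + D(a) + B(x,a)$ where $B(x,a) = D(x+a) - D(x) - D(a)$ is biadditive (linear in each argument), and planarity says $x \mapsto B(x,a)$ is a bijection for every $a \neq 0$. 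Since both $f$ and $g$ have only terms of the shape $x^{p^i+p^j}$, they carry no linear or constant part.

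The heart of the argument is to extract, from the equation $g(x) = L_1\bigl(f(L_2(x)+c_2)\bigr) + c_1 + L_3(x) + c_3$, the claim that the constants and the linear part $L_3$ must cancel, leaving $g(x) = L_1\bigl(f(L_2(x))\bigr)$, which is exactly linear equivalence with $(l_1', l_2') = (L_1, L_2)$. To do this I would use the biadditive expansion: compute $g(x+a) - g(x) - g(a) + g(0)$, which on the left equals the biadditive form $B_g(x,a)$ of $g$ (the constant and linear parts of $g$ drop out automatically because $g$ is DO), while on the right, after substituting the expansion of $f$ around $c_2$, one gets $L_1\bigl(B_f(L_2 x, L_2 a)\bigr)$; here the affine shift $c_2$ and all the linear/constant terms coming from $l_1, l_3$ disappear under the second difference. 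This identifies $B_g(x,a) = L_1\bigl(B_f(L_2 x, L_2 a)\bigr)$ for all $x,a$. Setting then $x = a$ gives $2\,g(x) = 2\,L_1\bigl(f(L_2 x)\bigr)$ (using $B_g(x,x) = g(2x) - 2g(x) = 4g(x) - 2g(x) = 2g(x)$ since $g$ is quadratic, and similarly for $f$), and since $p$ is odd we may divide by $2$ to conclude $g(x) = L_1\bigl(f(L_2 x)\bigr)$.

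The main obstacle, and the step that needs care, is justifying that the second difference $g(x+a) - g(x) - g(a) + g(0)$ really does strip $g$ down to its biadditive form and that the same operation applied to the right-hand side eliminates \emph{all} contributions of $c_1, c_2, c_3$ and $L_3$ — in particular one must check that $f(L_2(x+a) + c_2) - f(L_2 x + c_2) - f(L_2 a + c_2) + f(c_2)$ collapses to $B_f(L_2 x, L_2 a)$ with no leftover terms depending on $c_2$, which follows from biadditivity of $B_f$ and the fact that $f$ has no linear part, but deserves to be spelled out. Once that is in hand, the final division by $2$ (legitimate since $\mathrm{char}\,\mathbb{F}_{p^n} = p$ is odd) completes the proof. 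The converse direction is immediate: linear equivalence is EA-equivalence with $l_1, l_2$ linear and $l_3 = 0$.
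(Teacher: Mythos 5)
Your proof is correct and follows essentially the same route as the paper: decompose the affine maps into a linear part plus a constant, expand $f$ around the translation, and observe that only the quadratic (DO) part can survive on the right-hand side. The paper implements the cancellation by writing $f(x)=x*x$ for the associated presemifield product and matching the DO part against $g$, while you reach the same conclusion via the second difference $g(x+a)-g(x)-g(a)+g(0)$ and the polarization identity $B_g(x,x)=2g(x)$; both arguments are valid and rest on the same facts.
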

\begin{proof}
    Since linear equivalence is a special case of  EA-equivalence, we only need to prove the contrary statement.
    Now assume that $f$ and $g$ are EA-equivalent, i.e. there is affine functions $l_1$, $l_2$ and $l_3$ such that \begin{equation}\label{eq_EA=linear_DO}
      g = l_1 \circ f \circ l_2+l_3,
    \end{equation}
    where $l_1$ and $l_2$ are both permutations. Notice that if $f$ is a planar DO function, there exists a presemifield multiplication $*$ such that, $f(x)=x*x$. Let $l_i(x)=\bar{l}_i(x)+a_i$, where $\bar{l}_i(0)=0$ for $i=1,2$. Then the right side of (\ref{eq_EA=linear_DO}) becomes:
    \begin{align*}
      &l_1\circ f(\bar{l}_2(x)+a_2)+l_3\\
      =&l_1\left(\bar{l}_2(x)*\bar{l}_2(x)+2\bar{l}_2(x)*a_2+a_2*a_2\right)+l_3\\
      =&\bar{l}_1\left(\bar{l}_2(x)*\bar{l}_2(x)\right)+2\bar{l}_1\left(\bar{l}_2(x)*a_2\right)+\bar{l}_1(a_2*a_2)+a_1+l_3.
    \end{align*}
    According to the distributivity of presemifield, $\bar{l}_1(\bar{l}_2(x)*\bar{l}_2(x))$ is also a DO function, namely a quadratic form, and the rest part of the equation above is affine. However, as the right side of (\ref{eq_EA=linear_DO}) is a planar DO function, we have
    $$g(x)=\bar{l}_1\left(\bar{l}_2(x)*\bar{l}_2(x)\right)=\bar{l}_1\circ f \circ \bar{l}_2(x),$$
    which means that $f$ and $g$ are linear equivalent.\qed
\end{proof}

Furthermore, because of the correspondence between commutative presemifields with odd characteristic and planar functions, as we mentioned above, the strong isotopism of two commutative presemifields is equivalent to the linear equivalence of the corresponding  planar DO functions, which we call directly the \emph{equivalence} of planar DO functions.

In the end of this section, we list all the commutative semifields of order ${p^n}$ that are known. For any odd $p$, there are:
\begin{enumerate}
  \item Finite Field;
  \item Albert commutative twisted field \cite{Albert52};
  \item Dickson semifield \cite{Dickson1906};
  \item Budaghyan-Helleseth semifield \cite{B-H2008}, with $n$ even (also discovered independently by Zha and Wang in \cite{Zha2009-1});
  \item Zha-Kyureghyan-Wang semifield \cite{Zha2009}, with $n=3k$;
  \item Bierbrauer semifield \cite{BierBrauer2009-2}, with $n=4k$;
  \item Lunardon-Marino-Polverino-Tormbetti semifield \cite{LMPT2009}, with $n$ even (generalized by Bierbrauer in \cite{Bierbrauer2009-3}).
\end{enumerate}

\begin{remark}
In \cite{B-H2008}, Budaghyan and Helleseth present two families of planar functions, but in \cite{Bierbrauer2009-3}, Bierbrauer proves that one of them belongs to the other. Therefore, we consider them as one family.
\end{remark}
\begin{remark}
    It should be mentioned that, the non-isotopism between (4) and (7) seems to be open yet. The first author showed in \cite{Zhou2010} that for $n=6$ and $p=3$, these two semifields are isotopic. However, it is beyond our computer capacity to check the isotopisms for $n>6$.
\end{remark}

For $p=3$, there are:
\begin{enumerate}
    \setcounter{enumi}{7}
    \item Coulter-Matthews and Ding-Yuan semifield \cite{Coulter-Matthews1997,Ding-Yuan2006}, with $n$ odd;
    \item Cohen-Ganley semifield \cite{Cohen-Ganley1982}, with $n$ odd;
  \item Ganley semifield \cite{Ganley1981}, with $n$ odd.
\end{enumerate}

Sporadic examples (for (13), (14) and (15), we only give the corresponding planar functions which are discovered by Weng and Zeng):
\begin{enumerate}
  \setcounter{enumi}{10}
  \item Coulter-Henderson-Kosick semifield \cite{Coulter-Henderson-Kosick2007}, with $p=3$ and $n=8$ ;
  \item Penttila-Williams semifield \cite{Penttila-Williams2004}, with $p=3$ and $n=10$;
  \item $x^{90}+x^2$ on $\mathbb{F}_{3^5}$ \cite{Weng2010};
  \item $x^{162}+x^{108}-x^{84}+x^2$ on $\mathbb{F}_{3^5}$ \cite{Weng2010};
  \item $x^{50}+3x^6$ on $\mathbb{F}_{5^5}$ \cite{Weng2010}.
\end{enumerate}

\section{Semifield Family with two parameters}
In \cite{Cohen-Ganley1982}, Cohen and Ganley made significant progress in the investigation of commutative semifields of rank 2 over their middle nucleus. Here the ``rank 2'' means that if the size of semifield is $p^{2m}$, then its middle nucleus is of size $p^m$. Let $a,b,c,d\in\mathbb{F}_{p^m}$, $n=2m$. Cohen and Ganley defined a binary mapping $*$ from $ \mathbb{F}_{p^n}\times \mathbb{F}_{p^n}$ to $ \mathbb{F}_{p^n}$ as follows:
\begin{equation}\label{eq_CohenGanley_multi}
    (a,b)*(c,d)=(ac+\varphi_1(bd), ad+bc+\varphi_2(bd)),
\end{equation}
where $\varphi_1$ and $\varphi_2$ are linearized polynomials.
Furthermore, they considered under which condition $*$ defines the multiplication of a semifield.
Some sufficient and necessary conditions were derived, see \cite{BallLavrauw2002,Cohen-Ganley1982} for detail. Furthermore, finite fields, Dickson semifields, Cohen-Ganley semifields and the Penttila-Williams semifield are all of this form.

We see that the multiplication of $\mathbb{F}_{p^m}$ is used in the multiplication $*$ defined by (\ref{eq_CohenGanley_multi}), which is basically a linear combination of $ac$, $bd$, $ad$ and $bc$. Hence, one natural question arises: Is it possible to construct some semifields or presemifields, if we replace some of these finite field multiplications by semifield or presemifield multiplications? Our first candidate is naturally the multiplication of Albert twisted fields, and it turns out to work quite well.
\begin{theorem}\label{th_Mola_presemifield}
    Let $p$ be an odd prime, and let $m,k$ be positive integers, such that $\frac{m}{\gcd(m,k)}$ is odd. Define $x\circ_k y=x^{p^k}y+y^{p^k}x$. For elements $(a,b),(c,d)\in\mathbb{F}_{p^{m}}^2$, define a binary operation $*$ as follows:
    \begin{equation}\label{eq_mola_multi}
        (a,b)*(c,d)=(a\circ_k c+\alpha (b\circ_k d)^{\sigma}, ad+bc),
    \end{equation}
    where $\alpha$ is a non-square element in $\mathbb{F}_{p^m}$ and $\sigma$ is a field automorphism of $\mathbb{F}_{p^m}$. Then, $(\mathbb{F}_{p^{2m}},+,*)$ is a presemifield, which we denote by $\mathbb{P}_{k,\sigma}$.
\end{theorem}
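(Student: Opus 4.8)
The plan is to check the two defining properties of a presemifield for the operation $*$ defined in (\ref{eq_mola_multi}): that it is biadditive (equivalently, both distributive laws hold) and that it has no zero divisors. Biadditivity is immediate, since in $(a,b)*(c,d)=(a\circ_k c+\alpha(b\circ_k d)^\sigma,\,ad+bc)$ each coordinate is additive in $(a,b)$ and in $(c,d)$ separately: $x\mapsto x^{p^k}$ and $x\mapsto x^\sigma$ are $\mathbb{F}_p$-linear, $(a,b)\mapsto ad+bc$ is bilinear, and hence $x\circ_k y=x^{p^k}y+y^{p^k}x$ is a symmetric biadditive form. I also note that $*$ is commutative, so it suffices to prove that for each fixed $(a,b)\neq(0,0)$ the additive map $(c,d)\mapsto(a,b)*(c,d)$ is injective; injectivity of an additive map on the finite set $\mathbb{F}_{p^{2m}}$ then gives bijectivity, which is the quasigroup axiom.

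Before the zero-divisor step I would record two consequences of the hypothesis that $\frac{m}{\gcd(m,k)}$ is odd, writing $d=\gcd(m,k)$. First, the image of $t\mapsto t^{p^k-1}$ on $\mathbb{F}_{p^m}^*$ is a subgroup of odd order $(p^m-1)/(p^d-1)$, so it does not contain $-1$; equivalently, $x\circ_k y=0$ with $x\neq0$ forces $y=0$ (if $x,y\neq0$ then $x^{p^k}y+y^{p^k}x=0$ gives $(x/y)^{p^k-1}=-1$, impossible). Second, the same hypothesis is equivalent to $\gcd(p^k+1,p^m-1)=2$ --- this is the classical condition for $x^{p^k+1}$ to be planar, i.e.\ for $\circ_k$ to define an Albert twisted field, see \cite{Albert52} --- so the image of $x\mapsto x^{p^k+1}$ is exactly the set of squares of $\mathbb{F}_{p^m}$, and in particular $a^{p^k+1}$ is a square for every $a$.

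Now suppose $(a,b)*(c,d)=(0,0)$ with $(a,b)\neq(0,0)$; the goal is $(c,d)=(0,0)$. If $b=0$, then $a\neq0$ and the equations reduce to $a\circ_k c=0$ and $ad=0$, forcing $c=d=0$ by the first fact above. If $a=0$, then $b\neq0$ and $\alpha(b\circ_k d)^\sigma=0$, $bc=0$ force $d=c=0$. In the remaining case $a,b\neq0$, the equation $ad+bc=0$ gives $c=-ad/b$; if $d=0$ then $c=0$ and we are done, so assume $d\neq0$ and set $t=d/b\neq0$ and $s=t+t^{p^k}$. Substituting $c=-at$ and $d=bt$ into the first coordinate and using $x\circ_k(xu)=x^{p^k+1}(u+u^{p^k})$ collapses it to $-a^{p^k+1}s+\alpha(b^{p^k+1})^\sigma s^\sigma=0$. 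By the first fact, $s\neq0$ (otherwise $t^{p^k-1}=-1$), hence $\alpha=\frac{a^{p^k+1}}{(b^{p^k+1})^\sigma}\cdot\frac{s}{s^\sigma}$. Here $a^{p^k+1}$ is a square and $(b^{p^k+1})^\sigma$ is a square (a field automorphism preserves squares), and $s/s^\sigma=s^{1-p^j}$, where $\sigma\colon x\mapsto x^{p^j}$, is a square because $p^j-1$ is even; therefore $\alpha$ is a square, contradicting the choice of $\alpha$ as a non-square in $\mathbb{F}_{p^m}$. This contradiction shows $(c,d)=(0,0)$ in all cases, completing the proof that $\mathbb{P}_{k,\sigma}$ is a presemifield.

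I expect the main obstacle to be exactly this last case: one must reduce the first coordinate equation to a relation whose only remaining unknown is the ``$\circ_k$-trace'' $s=t+t^{p^k}$, and then verify that each factor in the resulting expression for $\alpha$ is a square in $\mathbb{F}_{p^m}$. The delicate point is that $\sigma$ is an arbitrary field automorphism, and one needs the oddness of $p$ (so that $p^j-1$ and $p^k-1$ are even, and $\gcd(p^k+1,p^m-1)=2$) to keep the square/non-square dichotomy under control; the two preliminary facts and the subcases $a=0$, $b=0$ are then routine.
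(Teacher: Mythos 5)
Your proof is correct and follows essentially the same route as the paper: use the second coordinate to eliminate one variable, substitute into the first coordinate, and derive that $\alpha$ would have to be a square, contradicting its choice. You are somewhat more explicit than the paper about the two supporting facts (why $t+t^{p^k}\neq 0$ for $t\neq 0$, and why each factor in the resulting expression for $\alpha$ is a square), but the underlying argument is the same.
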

\begin{proof}
    It is routine to check the distributive law of $\mathbb{P}_{k,\sigma}$. Hence, to prove $\mathbb{P}_{k,\sigma}$
    is a presemifield, we only need to prove that
    $$(a,b)*(c,d)=0\hbox{ if and only if }a=b=0\hbox{ or }c=d=0.$$

    Assume that $(a,b)*(c,d)=0$, then we have
    \begin{eqnarray}\label{eq_mola_both0}
      a\circ_k c+\alpha (b\circ_k d)^{\sigma} &=& 0~, \\
    \nonumber  ad+bc &=& 0~.
    \end{eqnarray}

    When $d=0$, we have $a\circ_k c=0$ and $bc=0$, which means $c=0$ or $a=b=0$ since $\circ_k$ is the Albert presemifield multiplication on $\mathbb{F}_{p^m}$.

    When $d\neq 0$, we have $a =-\frac{bc}{d}$. If $b=0$, then $a=0$. If $b\neq0$, then eliminating $a$ in (\ref{eq_mola_both0}), we have
    $$\alpha(b^{p^k}d+d^{p^k}b)^\sigma=c^{p^k+1}\left(\frac{b}{d}+\left(\frac{b}{d}\right)^{p^k}\right),$$
    which means that
    $$\alpha=(c^{p^k+1}(d^{-\sigma})^{p^k+1})\left(\frac{b}{d}+\left(\frac{b}{d}\right)^{p^k}\right)^{1-\sigma}.$$
    However, the equation cannot hold, since $\alpha$ is a non-square in $\mathbb{F}_{p^m}$. Therefore, we get $a=b=0$. \qed
\end{proof}

To analysis the properties of $\mathbb{P}_{k,\sigma}$, we need the following well-known results:
\begin{proposition}\label{pro_gcd(p^m-1,p^k+1)}
    Let $p$ be an odd prime, and let $m,k$ be positive integers, such that $\frac{m}{\gcd(m,k)}$ is odd. Then
    \begin{enumerate}
        \item $\gcd(p^m-1,p^k+1)=2$, which means that $x^{p^k+1}$ is a 2-1 mapping  on $\mathbb{F}_{p^m}$;
        \item $x^{p^k}+x$ is a permutation on $\mathbb{F}_{p^m}$.
    \end{enumerate}
\end{proposition}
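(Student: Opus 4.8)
The plan is to reduce both statements to one elementary fact about exponents. Set $e=\gcd(m,k)$ and write $m=em_1$, $k=ek_1$ with $\gcd(m_1,k_1)=1$; the hypothesis that $m_1=m/\gcd(m,k)$ is odd will enter exactly through the identity $\gcd(m,2k)=e$. Indeed $\gcd(m,2k)=e\,\gcd(m_1,2k_1)$, and since $m_1$ is odd we have $\gcd(m_1,2)=1$, so $\gcd(m_1,2k_1)=\gcd(m_1,k_1)=1$. Combined with the standard identity $\gcd(p^a-1,p^b-1)=p^{\gcd(a,b)}-1$, this is essentially all the arithmetic we shall need.

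For part (1), I would set $g=\gcd(p^m-1,p^k+1)$ and bound it from above and below. Upper bound: $p^k+1\mid p^{2k}-1$, so $g\mid\gcd(p^m-1,p^{2k}-1)=p^{\gcd(m,2k)}-1=p^e-1$; but $e\mid k$ gives $p^k\equiv1\pmod{p^e-1}$, hence $p^k+1\equiv2\pmod{p^e-1}$, so $g\mid\gcd(p^e-1,2)=2$ (using that $p$ is odd). Lower bound: $p$ odd makes $2$ divide both $p^m-1$ and $p^k+1$, so $2\mid g$. Hence $g=2$. The ``$2$-$1$'' conclusion then follows because $x\mapsto x^{p^k+1}$ is an endomorphism of the cyclic group $\mathbb{F}_{p^m}^{*}$ with kernel of order $g=2$ (and it fixes $0$), so every nonzero value in its image has exactly two preimages.

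For part (2), the map $L\colon x\mapsto x^{p^k}+x$ is $\mathbb{F}_p$-linear, so it suffices to show $\ker L=\{0\}$. Suppose $x\neq0$ and $x^{p^k}=-x$. Applying the $p^k$-power map once more and using that $p^k$ is odd, $x^{p^{2k}}=(-x)^{p^k}=-x^{p^k}=x$, so $x\in\mathbb{F}_{p^{2k}}\cap\mathbb{F}_{p^m}=\mathbb{F}_{p^{\gcd(2k,m)}}=\mathbb{F}_{p^e}$; since $e\mid k$ this forces $x^{p^k}=x$, whence $-x=x$, i.e.\ $2x=0$, contradicting $x\neq0$ as $p$ is odd. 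Thus $L$ is injective, hence bijective. (Equivalently, $x^{p^k}=-x$ makes the multiplicative order of $x$ divide $\gcd(p^{2k}-1,p^m-1)=p^e-1$, again placing $x$ in $\mathbb{F}_{p^e}$; I would use whichever phrasing is shorter in context.)

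I do not expect a genuine obstacle here — this is a known lemma — and the only delicate point is the equality $\gcd(m,2k)=\gcd(m,k)$, which is precisely where the oddness hypothesis is used: if $m/\gcd(m,k)$ were even, then $p^e+1$ would divide both $p^k+1$ and $p^m-1$, and both parts of the proposition would fail badly. Everything else is routine manipulation of cyclotomic-type divisibilities.
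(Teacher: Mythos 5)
Your argument is correct and complete. Note, however, that the paper offers no proof of this proposition at all: it is stated as a ``well-known result'' and used without justification, so there is nothing to compare your approach against. Your two reductions --- bounding $\gcd(p^m-1,p^k+1)$ via $p^k+1\mid p^{2k}-1$ and $\gcd(p^m-1,p^{2k}-1)=p^{\gcd(m,2k)}-1$, and showing $x\mapsto x^{p^k}+x$ has trivial kernel by forcing a nonzero kernel element into $\mathbb{F}_{p^{\gcd(2k,m)}}=\mathbb{F}_{p^{\gcd(k,m)}}$ --- are the standard ones, and you correctly isolate the single place where the hypothesis that $m/\gcd(m,k)$ is odd enters, namely $\gcd(m,2k)=\gcd(m,k)$. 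The closing remark that both parts fail when $m/\gcd(m,k)$ is even is also accurate and a nice sanity check.
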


It is easy to see that different  $\alpha$ generate isotopic semifields, because
\begin{align*}
  &(a\circ_k c+\alpha (\beta b\circ_k \beta d)^{\sigma}, a(\beta d)+c(\beta b))\\
  =&(a\circ_k c+\alpha\beta^{(p^k+1)\sigma} (b\circ_k d)^{\sigma}, \beta(ad+bc)),
\end{align*}
and the image set of $(\cdot)^{p^k+1}$ are all the squares in $\mathbb{F}_{p^m}$.
In the remaining part, we assume that the non-square $\alpha$ is an element of  $\mathbb{F}_{p^k}\cap\mathbb{F}_{p^m}=\mathbb{F}_{p^l}$, $l=\gcd(k,m)$.
Furthermore,
the planar function that corresponds to $\mathbb{P}_{k,\sigma}$ is,
$$    (x,y)\mapsto
    (   2x^{p^k+1}+2\alpha (y^{p^k+1})^\sigma ,
        2xy ).$$
Divide it by 2, we have
\begin{equation*}
    (x,y)\mapsto
    (   x^{p^k+1}+\alpha (y^{p^k+1})^\sigma ,
        xy )
\end{equation*}
If there exists some $u$ such that $p^k+1=p^u(p^s+1)\mod p^m-1$, then $\mathbb{P}_{k,\sigma}$ is isotopic to $\mathbb{P}_{s,\sigma}$, since
$$    (x,y)\mapsto
    (   x^{p^k+1}+\alpha (y^{p^k+1})^\sigma ,
        xy ),$$
is equivalent with
$$    (x,y)\mapsto
    (   (x^{p^u})^{p^s+1}+\alpha ((y^{p^u})^{p^s+1})^\sigma ,
        (x^{p^u}y^{p^u})^{p^{-u}} ),$$
which is also equivalent with
$$    (x,y)\mapsto
    (   x^{p^s+1}+\alpha (y^{p^s+1})^\sigma ,
        xy ).$$
For different $\sigma$ but the same $k$, the same result can also be derived. Hence, in the rest of this paper, we always let  $0\le k, r\le \lfloor\frac{m}{2}\rfloor$ and $k\neq 0$, where $\sigma(x)=x^{p^r}$.

To get a semifield $\mathbb{S}_{k,\sigma}$ from our presemifield, we can define the multiplication $\star$ of $\mathbb{S}_{k,\sigma}$ as follows:
\begin{equation}\label{eq_mola_semifield_multi}
     ((a,b)*(1,0))\star((c,d)*(1,0)):=(a,b)*(c,d)
\end{equation}
Let $L(a,b)=(a,b)*(1,0)=(a+a^{p^k},b)$, which is a linear mapping and permutation on $\mathbb{F}_{p^{2m}}$ by Proposition \ref{pro_gcd(p^m-1,p^k+1)}.
For convenience, when $\sigma$ is the identity mapping on $\mathbb{F}_{p^m}$, we will denote our presemifield and semifield by $\mathbb{P}_{k}$ and $\mathbb{S}_{k}$.

If $L(\beta)=y\in\mathbb{F}_{p^{2m}}$ is in the middle nucleus of $\mathbb{S}_{k,\sigma}$, then for any $x,y \in \mathbb{F}_{p^{2m}}$ we have,
$$(x\star L(\beta))\star z=x\star(L(\beta)\star z),$$
since $L$ is a permutation. It is equivalent to
$$(L(x)\star L(\beta))\star L(z)=L(x)\star(L(\beta)\star L(z)),$$
which is also,
$$L^{-1}(x*\beta)*z=x*L^{-1}(\beta*z).$$
Furthermore, we can precisely determine the middle nucleus of $\mathbb{S}_{k,\sigma}$:
\begin{theorem}\label{th_S_Nm}
    Let $\mathbb{S}_{k,\sigma}$ be the semifield with the multiplication $\star$ defined on $\mathbb{F}_{p^{2m}}$ as in (\ref{eq_mola_semifield_multi}), with $\alpha\in\mathbb{F}_{p^l}$, $\alpha\neq0$ and $l=\gcd(m,k)$, then
    \begin{enumerate}
      \item If $\sigma$ is the identity mapping, then the middle nucleus $N_m(\mathbb{S}_k)$ is isomorphic to $\mathbb{F}_{p^{2l}}$;
      \item If $\sigma$ is not trivial, then the middle nucleus $N_m(\mathbb{S}_{k,\sigma})$ is isomorphic to $\mathbb{F}_{p^{l}}$.
    \end{enumerate}
\end{theorem}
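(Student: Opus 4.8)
The plan is to use the reformulation of the middle-nucleus condition given just before the theorem: writing $L(a,b)=(a+a^{p^k},b)$ and $\phi(w)=w+w^{p^k}$ (a permutation of $\mathbb{F}_{p^m}$ by Proposition~\ref{pro_gcd(p^m-1,p^k+1)}, with $\mathbb{F}_p$-linear inverse $\psi:=\phi^{-1}$), the element $L(\beta)$ lies in $N_m(\mathbb{S}_{k,\sigma})$ if and only if $L^{-1}(x*\beta)*z=x*L^{-1}(\beta*z)$ for all $x,z\in\mathbb{F}_{p^{2m}}$. Since every nucleus is a finite field, it suffices to determine the \emph{set} of admissible $\beta=(\beta_1,\beta_2)\in\mathbb{F}_{p^m}^2$ and then count it: $p^{2l}$ admissible $\beta$ forces $N_m\cong\mathbb{F}_{p^{2l}}$, while $p^l$ forces $N_m\cong\mathbb{F}_{p^l}$.

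I would first extract the necessary conditions from the \emph{second} coordinate of the criterion. Expanding with $x=(x_1,x_2)$, $z=(z_1,z_2)$ and cancelling, the second coordinate reduces to
\[
z_2\bigl(\psi(U)-x_1\beta_1\bigr)=x_2\bigl(\psi(U')-\beta_1 z_1\bigr),
\]
where $U,U'$ are the first coordinates of $x*\beta$ and $\beta*z$. Setting $x_2=0$ gives $\psi(x_1\circ_k\beta_1)=x_1\beta_1$ for all $x_1$; applying $\phi$ and rearranging yields $(\beta_1-\beta_1^{p^k})(x_1^{p^k}-x_1)=0$ for all $x_1$, and as $k\not\equiv 0\pmod m$ this forces $\beta_1\in\mathbb{F}_{p^k}\cap\mathbb{F}_{p^m}=\mathbb{F}_{p^l}$. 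With $\beta_1\in\mathbb{F}_{p^l}$ one has $\psi(t\circ_k\beta_1)=\beta_1 t$, and, using additivity of $\psi$, substituting back reduces the displayed identity to $z_2\,g(x_2)=x_2\,g(z_2)$ for all $x_2,z_2$, where $g(t):=\psi\bigl(\alpha(t\circ_k\beta_2)^\sigma\bigr)$. Taking $z_2=1$ gives $g(t)=\mu t$ with $\mu:=g(1)$; applying $\phi$ turns this into the linearized identity
\[
\alpha\,(t\circ_k\beta_2)^\sigma=\mu t+\mu^{p^k}t^{p^k}\qquad(t\in\mathbb{F}_{p^m}).
\]
Writing $\sigma(x)=x^{p^r}$ and comparing the (reduced) exponents $\{p^{(k+r)\bmod m},p^r\}$ on the left with $\{1,p^k\}$ on the right pins down $\beta_2$. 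For $r=0$ one gets $\mu=\alpha\beta_2^{p^k}$ and $\mu^{p^k}=\alpha\beta_2$, hence $\beta_2^{p^{2k}}=\beta_2$, i.e. $\beta_2\in\mathbb{F}_{p^{\gcd(2k,m)}}$; here the hypothesis that $\tfrac{m}{\gcd(m,k)}$ is odd is exactly what yields $\gcd(2k,m)=\gcd(k,m)=l$, so $\beta_2\in\mathbb{F}_{p^l}$. For $r\neq0$, if $\beta_2\neq0$ the two left-hand monomials are distinct, forcing $\{(k+r)\bmod m,\,r\}=\{0,k\}$, hence $r=k$ and $2k\equiv0\pmod m$, i.e. $m=2k$ — contradicting the oddness hypothesis; therefore $\beta_2=0$.

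It then remains to prove sufficiency, which is a direct computation made short by observing that both sides of the criterion are $\mathbb{F}_p$-linear in $\beta$. So it is enough to verify it for $\beta=(c,0)$, $c\in\mathbb{F}_{p^l}$ (any $\sigma$), and, when $\sigma=\mathrm{id}$, also for $\beta=(0,b)$, $b\in\mathbb{F}_{p^l}$. In each case $c,b,\alpha\in\mathbb{F}_{p^k}$, so $\circ_k$ against them is a scalar multiple of $\phi$; this makes $L^{-1}$ explicit (for instance $L^{-1}(x*(c,0))=(cx_1,cx_2)$) and both sides of the criterion collapse to the same pair of expressions. Consequently $N_m(\mathbb{S}_k)=L(\mathbb{F}_{p^l}\times\mathbb{F}_{p^l})$ has $p^{2l}$ elements and $N_m(\mathbb{S}_{k,\sigma})=L(\mathbb{F}_{p^l}\times\{0\})$ has $p^l$ elements; being finite fields, they are isomorphic to $\mathbb{F}_{p^{2l}}$ and $\mathbb{F}_{p^l}$ respectively.

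The main obstacle I expect is the bookkeeping around $\psi=(\mathrm{id}+\mathrm{Frob}^k)^{-1}$: one must repeatedly use that $\psi$ is additive and that $\psi\bigl(c(t+t^{p^k})\bigr)=ct$ for $c\in\mathbb{F}_{p^k}$, and the exponent-matching step is delicate. Everything ultimately rests on the number-theoretic fact $\gcd(2k,m)=\gcd(k,m)$ together with the exclusion of $m=2k$ — both of which hold precisely because $\tfrac{m}{\gcd(m,k)}$ is odd — so these are the points that need to be stated carefully. The $\mathbb{F}_p$-linearity reduction is what keeps the sufficiency verification from becoming unwieldy.
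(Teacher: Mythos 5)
Your argument is correct, and it shares the paper's overall skeleton (the criterion $L^{-1}(x*\beta)*z=x*L^{-1}(\beta*z)$, with all necessary conditions extracted from the second coordinate), but the way you pin down the second component $\beta_2$ of $\beta$ is genuinely different from what the paper does. After reducing to $z_2\,g(x_2)=x_2\,g(z_2)$ you set $z_2=1$, apply $\phi$, and compare coefficients of the resulting linearized identity $\alpha(t\circ_k\beta_2)^\sigma=\mu t+\mu^{p^k}t^{p^k}$; this treats the cases $\sigma=\mathrm{id}$ and $\sigma\neq\mathrm{id}$ uniformly, with the hypotheses entering only through $\gcd(2k,m)=\gcd(k,m)$ and the exclusion of $m=2k$. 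The paper instead handles $\sigma=\mathrm{id}$ by a comparable direct computation, but for non-trivial $\sigma$ it takes a detour through the algebra: it first uses the fact that $N_m$ is a field to compute $L(c,d)\star L(c,d)$ and deduce $d^{p^k+1}\in\mathbb{F}_{p^l}$, hence $d\in\mathbb{F}_{p^l}$, then invokes the subfield lattice of $\mathbb{F}_{p^{2l}}$ to reduce to excluding $d=1$, which it rules out by a special substitution ($d=f=1$, $ce=0$) into its long identity. Your coefficient-matching route avoids that two-step structural argument entirely and is arguably cleaner; what the paper's route buys is that it sidesteps the somewhat delicate exponent bookkeeping (which monomials $t^{p^{(k+r)\bmod m}},t^{p^r}$ can coincide with $t,t^{p^k}$) that your version must carry out carefully. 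Your $\mathbb{F}_p$-linearity-in-$\beta$ observation also makes the sufficiency check, which the paper dismisses as routine, genuinely short. The only points to state with full care in a written version are the exponent-matching case analysis (including the degenerate case $\mu=0$) and the verification that the first coordinate of the criterion also holds in the sufficiency step; both go through as you indicate.
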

\begin{proof}
    Assume that $L(c,d)\in N_m(\mathbb{S}_{k,\sigma})$, then we have
    \begin{equation}\label{eq_S_Nm_1}
        L^{-1}((a,b)*(c,d))*(e,f)= (a,b)*L^{-1}((c,d)*(e,f)),
    \end{equation}
    for any $(a,b),(e,f) \in \mathbb{F}_{p^{m}}^2$. Since $x^{p^k}+x$ is a permutation, we can define $u$ by
    \begin{equation}\label{eq_S_u}
        u+u^{p^k}=a\circ_k c+\alpha (b\circ_k d)^\sigma,
    \end{equation}
    and we get
    \begin{align}\label{eq_Mi_(xy)z}
        \nonumber&L^{-1}((a,b)*(c,d))*(e,f)\\
                 =&(u,ad+bc)*(e,f)\\
        \nonumber=&(u\circ_ke+\alpha(f\circ_k(ad+bc))^\sigma, uf+(ad+bc)e).
    \end{align}
    Similarly, we define $v$ by
    \begin{equation}\label{eq_S_v}
        v+v^{p^k}=c\circ_k e+\alpha (d\circ_k f)^\sigma,
    \end{equation}
    and the right side of (\ref{eq_S_Nm_1}) is
    \begin{align}\label{eq_Mi_x(yz)}
        \nonumber &(a,b)*L^{-1}((c,d)*(e,f))\\
        =&(a\circ_kv+\alpha(b\circ_k(cf+de))^\sigma, vb+a(cf+de)).
    \end{align}
    By comparing the second component of two sides of (\ref{eq_S_Nm_1}), we have
    $$uf+bce=vb+acf.$$
    For $f=0$ but $b\neq 0$, we have $ceb=vb$, which means that $v=ce$. Eliminate $v$ in (\ref{eq_S_v}), we have
    $$ce+(ce)^{p^k}=c^{p^k}e+ce^{p^k},$$
    for any $e\in \mathbb{F}_{p^m}$. That means $c=c^{p^k}$, namely,
    \begin{equation}\label{eq_Mi_c_in_F_q^l}
        c\in \mathbb{F}_{p^l}.
    \end{equation}

    Furthermore, for $f\neq 0$, we have
    $$u+\frac{bce}{f}=\frac{b}{f}\cdot{v}+ac,$$
    by eliminating $u$ using (\ref{eq_S_u}) and (\ref{eq_S_v}), we have
    \begin{align*}
        &(a^{p^k}+a)c+\alpha(b^{p^k}d+bd^{p^k})^\sigma+c\left(\frac{b}{f}\cdot e+\left(\frac{b}{f}\cdot e\right)^{p^k}\right)\\
        =& \frac{b}{f}v+\left(\frac{b}{f}\right)^{p^k}(c\circ_k e+\alpha(d\circ_k f)^\sigma-v)+ac+(ac)^{p^k}\\
        =& \frac{b}{f}v+\left(\frac{b}{f}\right)^{p^k}(c(e+e^{p^k})+\alpha(d\circ_k f)^\sigma-v)+c(a+a^{p^k}).\\
    \end{align*}
    By canceling the same terms on both sides, we have
    \begin{equation}\label{eq_Mi_long1}
        \alpha \left((b\circ_k d)^\sigma-(d\circ_k f)^\sigma\left(\frac{b}{f}\right)^{p^k}\right)+\left( \frac{b}{f}-\left(\frac{b}{f}\right)^{p^k}\right)(ce-v)=0.
    \end{equation}

    If $\sigma$ is the identity mapping, then $$\left(\frac{b}{f}-\left(\frac{b}{f}\right)^{p^k}\right)(\alpha fd^{p^k}+ce-v)=0.$$
    Since the equation above should hold for any $b$ and $f\neq 0$, we have
    $$v=\alpha f d^{p^k}+ce,$$
    which means that
    $$v+v^{p^k}=(e^{p^k}+e)c+\alpha(f^{p^k}d^{p^{2k}}+fd^{p^k}),$$
    since $\alpha\in \mathbb{F}_{p^l}=\mathbb{F}_{p^k}\cap\mathbb{F}_{p^m}$. Together with (\ref{eq_S_v}), we have
    $$df^{p^k}+fd^{p^k}=f^{p^k}d^{p^{2k}}+fd^{p^k},$$
    for any $f\neq 0$, which means that $d=d^{p^k}$.
    Therefore, if $(c,d)\in N_m(\mathbb{S}_{k})$, then $c,d\in \mathbb{F}_{p^k}\cap\mathbb{F}_{p^m}(=\mathbb{F}_{p^l})$. Since the middle nucleus of a finite semifield is isomorphic to a finite field, $N_m(\mathbb{S}_{k})$ is isomorphic to a subfield of $\mathbb{F}_{p^{2l}}$. Conversely, it is routine to check that $(c,d)\in N_m(\mathbb{S}_{k})$ , for $c,d\in\mathbb{F}_{p^l}$. Therefore we proved the claim (1).

    If $\sigma$ is not trivial, then it is also routine to check that $(c,0)\in N_m(\mathbb{S}_{k,\sigma})$, for $c\in\mathbb{F}_{p^l}$, namely $\mathbb{F}_{p^{l}}$ is a subfield of $N_m(\mathbb{S}_{k,\sigma})$. Next, we are going to prove $d=0$. We separate this proof into two steps, first let us  prove that $d\in\mathbb{F}_{p^l}$.
    Let $L(c,d)\in N_m(\mathbb{S}_{k,\sigma})$, which means that $c\in\mathbb{F}_{p^l}$, see (\ref{eq_Mi_c_in_F_q^l}), and
    $$L(c,d)\star L(c,d)=(c,d)*(c,d)=(2c^2+2\alpha d^{\sigma(p^k+1)},2cd).$$

    Notice that the middle nucleus is a finite field in the semifield, hence we have $c^2+\alpha d^{\sigma(p^k+1)}\in\mathbb{F}_{p^l}$, which means that $d^{p^k+1}\in\mathbb{F}_{p^l}$. Since $l=\gcd(m,k)$, we have $d^{p^k+1}=d^{p^{2k}+p^k}$, hence $d\in \mathbb{F}_{p^{k}}\cap\mathbb{F}_{p^m}=\mathbb{F}_{p^l}$.

    Now we see that $N_m(\mathbb{S}_{k,\sigma})$ is isomorphic to a subfield of $\mathbb{F}_{p^{2l}}$. Notice that $\mathbb{F}_{p^l}$ is a subfield of $N_m(\mathbb{S}_{k,\sigma})$, according to the extension of finite field, we only need to prove that it is impossible to have $d=1$. Now let $d=f=1$ and $ce=0$, then by (\ref{eq_S_v}), we have $v=\alpha$. Furthermore, (\ref{eq_Mi_long1}) becomes
    $$\alpha((b^{p^k}+b)^\sigma-2b^{p^k})-\alpha(b-b^{p^k})=0,$$
    which means that
    $$(b^{p^k}+b)^\sigma=(b^{p^k}+b),$$
    holds for all $b\in \mathbb{F}_{p^m}$. However, this cannot hold, since $p^k\cdot p^s<p^m$, where $x^\sigma=x^{p^s}$. Hence, $d=0$. \qed
\end{proof}

Noticing that $N_l(\mathbb{S}_{k,\sigma})=N_r(\mathbb{S}_{k,\sigma})=N(\mathbb{S}_{k,\sigma})$, since $\mathbb{S}_{k,\sigma}$ is commutative, the nucleus of $\mathbb{S}_{k,\sigma}$ can also be derived:
\begin{theorem}\label{th_S_Nl}
    Let $\mathbb{S}_{k,\sigma}$ be the semifield with the multiplication $\star$ defined as in (\ref{eq_mola_semifield_multi}) on $\mathbb{F}_{p^{2m}}$, then its (left, right) nucleus $N(\mathbb{S}_{k,\sigma})$ is isomorphic to $\mathbb{F}_{p^{t}}$, where $x^\sigma=x^{p^s}$ and $t=\gcd(m,k,s)$.
\end{theorem}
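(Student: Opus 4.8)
The plan is to bootstrap from Theorem~\ref{th_S_Nm}. Since $\mathbb{S}_{k,\sigma}$ is commutative we have $N(\mathbb{S}_{k,\sigma})=N_l(\mathbb{S}_{k,\sigma})=N_r(\mathbb{S}_{k,\sigma})=N_l(\mathbb{S}_{k,\sigma})\cap N_m(\mathbb{S}_{k,\sigma})$, so it suffices to decide which of the elements of the middle nucleus already found in Theorem~\ref{th_S_Nm} also lie in the left nucleus. As in the discussion preceding Theorem~\ref{th_S_Nm}, I would transport the left-nucleus condition through the linear permutation $L(a,b)=(a+a^{p^k},b)=(L_1(a),b)$, where $L_1(x)=x+x^{p^k}$ is a permutation of $\mathbb{F}_{p^m}$ by Proposition~\ref{pro_gcd(p^m-1,p^k+1)}: an element $L(c,d)$ belongs to $N_l(\mathbb{S}_{k,\sigma})$ if and only if $L^{-1}\big((c,d)*(a,b)\big)*(e,f)=(c,d)*L^{-1}\big((a,b)*(e,f)\big)$ for all $(a,b),(e,f)\in\mathbb{F}_{p^m}^2$. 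By Theorem~\ref{th_S_Nm} the candidates satisfy $c\in\mathbb{F}_{p^l}$ in all cases, together with $d\in\mathbb{F}_{p^l}$ if $\sigma$ is the identity and $d=0$ if $\sigma$ is nontrivial; here $l=\gcd(m,k)$ and $\mathbb{F}_{p^l}=\mathbb{F}_{p^k}\cap\mathbb{F}_{p^m}$.

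The computation is driven by one observation: $c,d,\alpha\in\mathbb{F}_{p^l}\subseteq\mathbb{F}_{p^k}$ are fixed by the Frobenius $x\mapsto x^{p^k}$, hence commute with $L_1$, and $L_1$ (so also $L_1^{-1}$) is $\mathbb{F}_{p^l}$-linear. Consequently $c\circ_k z=c\,L_1(z)$, $L_1^{-1}(c\,L_1(z))=cz$, and likewise with $d$ or $\alpha$ in place of $c$. This puts $L^{-1}\big((c,d)*(a,b)\big)$ and the two triple products into closed form, and expanding the displayed identity leaves two polynomial identities in $a,b,e,f$, one per $\mathbb{F}_{p^m}$-coordinate. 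When $\sigma=\mathrm{id}$, comparing the second coordinates reduces (after collecting terms of the shape $(x-x^{p^k})(y-y^{p^k})$) to $d\big[(a-a^{p^k})(e-e^{p^k})+\alpha(b-b^{p^k})(f-f^{p^k})\big]=0$ for all $a,b,e,f$; since $k\neq0$ forces $\mathbb{F}_{p^l}\subsetneq\mathbb{F}_{p^m}$, choosing $b=f=0$ and $a,e\notin\mathbb{F}_{p^l}$ gives $d=0$. Thus in every case the candidates are exactly the $L(c,0)$ with $c\in\mathbb{F}_{p^l}$.

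For such an element the two coordinates simplify at once: the second-coordinate identity holds automatically, and the first-coordinate identity collapses to $\alpha\,(c^{p^s}-c)\,(b\circ_k f)^{\sigma}=0$ for all $b,f$. Taking $b=f=1$ gives $b\circ_k f=2\neq0$, and $\alpha\neq0$, so $c^{p^s}=c$, i.e. $c\in\mathbb{F}_{p^s}\cap\mathbb{F}_{p^m}$. Combined with $c\in\mathbb{F}_{p^l}=\mathbb{F}_{p^k}\cap\mathbb{F}_{p^m}$ this forces $c\in\mathbb{F}_{p^{\gcd(m,k,s)}}=\mathbb{F}_{p^{t}}$. Running the computation backwards shows that every such $c$ indeed satisfies the displayed identity, so $N(\mathbb{S}_{k,\sigma})=\{L(c,0):c\in\mathbb{F}_{p^{t}}\}$, a set of $p^{t}$ elements; since $N(\mathbb{S}_{k,\sigma})$ is a finite field, it is isomorphic to $\mathbb{F}_{p^{t}}$. (When $\sigma=\mathrm{id}$ one has $s=0$ and $t=l$, and the first-coordinate constraint becomes vacuous, consistent with Theorem~\ref{th_S_Nm}.)

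The only real difficulty is the bookkeeping in the expansion: there are many monomials $a\circ_k e$, $b\circ_k f$, $a^{p^k}e$, $e^{p^k}a$, and one must systematically use that $c,d,\alpha$ are fixed by $x\mapsto x^{p^k}$ and that $\sigma$ distributes over products in order to bring the two sides to a comparable form. Note that---unlike in the proof of Theorem~\ref{th_Mola_presemifield}---the non-square property of $\alpha$ plays no role here; what is used is only $\alpha\neq0$ together with the fact that $(b\circ_k f)^{\sigma}$ is not identically $0$. As in the proof of Theorem~\ref{th_S_Nm}, the degenerate sub-cases ($d=0$ versus $d\neq0$, and $f=0$) should be handled separately so that the polynomial identities may be read off coordinate-wise.
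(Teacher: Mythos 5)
Your proposal is correct and follows essentially the same route as the paper: restrict to middle-nucleus candidates via Theorem~\ref{th_S_Nm}, transport the nucleus condition through $L$, use that $c,d,\alpha\in\mathbb{F}_{p^l}$ commute with $x\mapsto x+x^{p^k}$ so that $L^{-1}((c,0)*(a,b))=(ca,cb)$, and read off $c^{\sigma}=c$ from the first coordinate (the paper's ``$a^\sigma=a$''), handling the $\sigma=\mathrm{id}$ case by killing the second coordinate of the candidate exactly as the paper does with its choice $d=f=0$. The only cosmetic difference is that the paper concludes the $\sigma=\mathrm{id}$ case by observing $N$ is a proper subfield of $N_m\cong\mathbb{F}_{p^{2l}}$ containing $\mathbb{F}_{p^l}$, whereas you identify the elements directly; both amount to the same computation.
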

\begin{proof}
    By using the same notations as in the proof of Theorem \ref{th_S_Nm}, assume that $(a,b)$ is an element in $N(\mathbb{S}_{k,\sigma})$. Since $N(\mathbb{S}_{k,\sigma})\subseteq N_m(\mathbb{S}_{k,\sigma})$, by Theorem \ref{th_S_Nm}, we have $a\in \mathbb{F}_{p^l}=\mathbb{F}_{p^k}\cap\mathbb{F}_{p^m}$ and $b=0$. Moreover, by (\ref{eq_Mi_(xy)z}) and (\ref{eq_Mi_x(yz)}), we have
    $$    L^{-1}((a,b)*(c,d))*(e,f)
        =(u\circ_ke+\alpha(f\circ_k(ad))^\sigma, uf+ade),$$
    and
    $$    (a,b)*L^{-1}((c,d)*(e,f))
        =(a\circ_kv, a(cf+de))$$
    Since $u+u^{p^k}=a\circ_k c+\alpha (b\circ_k d)^\sigma=a(c+c^{p^k})$, we have $u=ac$ and
    $$L^{-1}((a,b)*(c,d))*(e,f)=(a(c\circ_ke)+\alpha a^\sigma(f\circ_k d)^\sigma, acf+ade),$$
    $$(a,b)*L^{-1}((c,d)*(e,f))=(a(v+v^{p^k}), a(cf+de)).$$
    By the definition of $v$, it follows that: $$L^{-1}((a,b)*(c,d))*(e,f)=(a,b)*L^{-1}((c,d)*(e,f)) \hbox{ if and only if } a^\sigma=a.$$
    Since $N(\mathbb{S}_{k,\sigma})\subseteq N_m(\mathbb{S}_{k,\sigma}) $, when $\sigma$ is non-trivial, from Theorem \ref{th_S_Nm} (2), we know that $N_m(\mathbb{S}_{k,\sigma})=\{(a,0)|a\in \mathbb{F}_{p^l}, l=\gcd(m,k)\}$. Therefore, we have $N(\mathbb{S}_{k,\sigma})\simeq \mathbb{F}_{p^t}$.

    When $\sigma$ is the identity mapping, let $a,b\in\mathbb{F}_{p^l}$, $b\neq 0$ and $d=f=0$. Assume that $(a,b)\in N(\mathbb{S}_{k,\sigma})$, then by comparing the second components of (\ref{eq_Mi_(xy)z}) and (\ref{eq_Mi_x(yz)}), we have $v=ce$, which means that $$v+v^{p^k}=ce+c^{p^k}e^{p^k}.$$ However, by (\ref{eq_S_v}), we have
    $$v+v^{p^k}=c\circ_k e.$$
    Hence,
    $$(c-c^{p^k})(e-e^{p^k})=0,$$
    which cannot hold for $c,e\in \mathbb{F}_{p^m}\setminus\mathbb{F}_{p^k}$. Hence, $N(\mathbb{S}_{k,\sigma})$ is a proper subset of $N_m(\mathbb{S}_{k,\sigma})$.

    Furthermore, since $N(\mathbb{S}_{k,\sigma})$ is a subfield in the finite field $N_m(\mathbb{S}_{k,\sigma})\cong\mathbb{F}_{p^{2l}}$, and it is routine to show that $\mathbb{F}_{p^l}\subseteq N(\mathbb{S}_{k,\sigma})$, finally we have $N(\mathbb{S}_{k,\sigma})\cong\mathbb{F}_{p^{l}}$ and $l=t=\gcd(m,k,0)$. \qed
\end{proof}

\begin{remark}
    If we let $k=0$, then $\mathbb{S}_{k,\sigma}$ is a Dickson semifield. It is easy to see that Theorem \ref{th_S_Nm} and Theorem \ref{th_S_Nl} also hold for Dickson semifields.
\end{remark}

\section{The isotopism between $\mathbb{S}_{k,\sigma}$}
It is natural to ask that for the same $m$ but different $k$ and $\sigma$, whether (\ref{eq_mola_multi}) define isotopic presemifields. As we mentioned after Theorem \ref{th_Mola_presemifield}, if there exists some $u$ such that $p^k+1=p^u(p^s+1)\mod p^m$, then $\mathbb{P}_{k,\sigma}$ is isotopic to $\mathbb{P}_{s,\sigma}$; for different $\sigma$ but same $k$, the same result can also be derived. Furthermore, we can prove the following:
\begin{theorem}\label{th_P_strong_isotopism}
    Let $\mathbb{P}_{k,\sigma}$ be the presemifield with the multiplication $*$ defined as in (\ref{eq_mola_multi}) on $\mathbb{F}_{p^{2m}}$. Let $0< k, s\le \lfloor\frac{m}{2}\rfloor$ and $0\le r, t\le \lfloor\frac{m}{2}\rfloor$, where $\sigma(x)=x^{p^r}$ and $\tau(x)=x^{p^t}$. If $(k,\sigma)\neq(s, \tau)$, then $\mathbb{P}_{k,\sigma}$ and $\mathbb{P}_{s,\tau}$ are not strongly isotopic.
\end{theorem}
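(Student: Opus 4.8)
The plan is to pass from presemifields to planar functions and then exploit the rigidity of the explicit bivariate shape. By the correspondence recalled just before the statement, a strong isotopism between the commutative presemifields $\mathbb{P}_{k,\sigma}$ and $\mathbb{P}_{s,\tau}$ is the same thing as a linear equivalence between the associated planar DO functions $f_{k,\sigma}(x,y)=\big(x^{p^{k}+1}+\alpha (y^{p^{k}+1})^{\sigma},\,xy\big)$ and $f_{s,\tau}$; moreover the computation following Theorem \ref{th_Mola_presemifield} shows that changing the non-square $\alpha$ only replaces a presemifield by a strongly isotopic one (the isotopism displayed there has equal first two components), so I may use the same $\alpha$ in both. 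Hence assume $N,L$ are $\mathbb{F}_{p}$-linear permutations of $\mathbb{F}_{p^{2m}}\cong\mathbb{F}_{p^{m}}^{2}$ with $f_{s,\tau}\circ N=L\circ f_{k,\sigma}$, and write $N=(N_{1},N_{2})$ with $N_{1}(x,y)=P(x)+Q(y)$, $N_{2}(x,y)=R(x)+S(y)$ for linearized polynomials $P,Q,R,S$ over $\mathbb{F}_{p^{m}}$, and $L$ similarly in block form.

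First I would extract coarse constraints from the nuclei. Strong isotopism implies isotopism, so the derived semifields have isomorphic nuclei; Theorems \ref{th_S_Nm} and \ref{th_S_Nl} then force $\sigma$ trivial $\iff\tau$ trivial (if only one were trivial the middle nuclei would have orders $p^{2\gcd(m,k)}$ and $p^{\gcd(m,s)}$, and $2\gcd(m,k)=\gcd(m,s)$ contradicts $m/\gcd(m,s)$ odd), as well as $\gcd(m,k)=\gcd(m,s)$, and in the nontrivial case also $\gcd(m,k,r)=\gcd(m,s,t)$ where $x^{\sigma}=x^{p^{r}}$ and $x^{\tau}=x^{p^{t}}$. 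So it remains to separate $k$ from $s$ (and $r$ from $t$) for pairs sharing the same gcd with $m$.

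The heart is a rigidity argument on the identity $f_{s,\tau}\circ N=L\circ f_{k,\sigma}$. Expand both sides and sort monomials into pure-$x$, pure-$y$, and mixed types $x^{p^{i}}y^{p^{j}}$; on the right, every mixed monomial comes from one of the terms $L_{i2}(xy)$ and is therefore \emph{diagonal}, $x^{p^{j}}y^{p^{j}}$. Matching the mixed part of the second component, $P(x)S(y)+R(x)Q(y)$ must be diagonal; viewed as a form on the exponent indices this is a rank-$\le 2$ expression forced to be a diagonal rank-$\le 2$ form, which pins the $x$-parts of $N_{1},N_{2}$ and the $y$-parts of $N_{1},N_{2}$ down to monomials — so that, after absorbing the known coordinate-swapping autotopism and a power of the Frobenius (which change neither $k$ nor $s$), $N$ becomes $\mathbb{F}_{p^{m}}$-linear, $N(x,y)=(ax+by,\,cx+dy)$ with $ad\ne bc$. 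Feeding this back, the mixed part of the first component — built from $(ax+by)^{p^{s}+1}$ and $\alpha(cx+dy)^{(p^{s}+1)p^{t}}$ — produces the off-diagonal monomials $x^{p^{s}}y$ and $xy^{p^{s}}$ (and their $p^{t}$-twists), which again cannot be absorbed by the diagonal $L_{i2}(xy)$; this constrains $a,b,c,d$ and, in the nontrivial case, forces $r=t$. Throughout, the hypothesis $m/\gcd(m,k)$ odd is what rules out the degenerate exponents $s,t\in\{0,m/2\}$ that would otherwise let these off-diagonal monomials collapse onto the diagonal.

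Finally, with $N$ in the reduced linear form, I would compare the pure-$x$ part of the first component: the left side contributes $x^{p^{s}+1}$ (together with a $p^{t}$-scaled $x^{(p^{s}+1)p^{t}}$), whereas the right side is a linearized polynomial evaluated at $x^{p^{k}+1}$, i.e. a combination of $x^{(p^{k}+1)p^{j}}$. Hence $p^{s}+1\equiv (p^{k}+1)p^{j}\pmod{p^{m}-1}$ for some $j$; writing both sides in base $p$ and using $0<k,s\le\lfloor m/2\rfloor$ together with $m/\gcd(m,k)$ odd forces $j=0$ and $k=s$, and with $r=t$ from the previous paragraph this gives $(k,\sigma)=(s,\tau)$, contradicting the hypothesis. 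The step I expect to be the main obstacle is the rigidity claim of the third paragraph: controlling precisely which mixed monomials $x^{p^{i}}y^{p^{j}}$ occur on the left and showing that matching them against only the diagonal monomials of $L_{i2}(xy)$ collapses $N$ to an (essentially) $\mathbb{F}_{p^{m}}$-semilinear map. The bookkeeping of vanishing coefficients and of the possible cancellation patterns among the several families of mixed monomials is delicate, and it is exactly there that the parity hypothesis on $m/\gcd(m,k)$ becomes indispensable.
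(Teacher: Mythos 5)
Your overall strategy is the same as the paper's: translate strong isotopism into linear equivalence of the planar DO functions, expand the resulting polynomial identity, sort monomials by type to force the linearized maps down to (Frobenius-)monomials, and finish by comparing exponents. The nuclei-based preliminary reduction is a sensible addition not present in the paper, though it only yields coarse constraints. However, there is a genuine gap exactly at the step you flag as the main obstacle, and the mechanism you propose there does not work as stated. You claim that the diagonality of the mixed part of the second component, i.e.\ $P(x)S(y)+R(x)Q(y)=\sum_j e_j x^{p^j}y^{p^j}$, already pins the blocks of $N$ down to monomials. It does not: take $P,Q$ to be arbitrary linearized permutation polynomials and set $N_1(x,y)=P(x)+Q(y)$, $N_2(x,y)=P(x)-Q(y)$. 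Then $N$ is invertible (one recovers $2P(x)$ and $2Q(y)$ from $N_1\pm N_2$), yet $N_1N_2=P(x)^2-Q(y)^2$ has \emph{no} mixed monomials at all, so its mixed part is vacuously diagonal while none of the four blocks is a monomial. What eliminates such configurations is the pure-part equations $P(x)R(x)=L_{21}(x^{p^k+1})$ and $Q(y)S(y)=L_{21}(\alpha (y^{p^k+1})^{\sigma})$ used \emph{together} with the mixed part: in the example above one gets $P(x)^2=L_{21}(x^{p^k+1})$, which forces $P=0$ because $x^{(p^k+1)p^j}$ never produces a term $x^{2p^i}$ when $k\not\equiv 0\pmod m$. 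This is precisely why the paper's proof begins by restricting to $y=0$ (its Claim~1), deriving $L_1(x)L_2(x)=\varphi_2(x^{p^s+1})$ and $L_1(x)^{p^k+1}+\alpha(L_2(x)^{p^k+1})^{\sigma}=\varphi_1(x^{p^s+1})$ and doing the coefficient bookkeeping on both simultaneously before touching the mixed terms.

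Two further points you would need to supply. First, the degenerate case where one restricted block vanishes (say $L_2(x,0)=0$) leaves you with $L_1(x)^{p^k+1}=\varphi_1(x^{p^s+1})$ with $L_1,\varphi_1$ linearized permutations; the paper disposes of this by invoking the isotopy classification of generalized twisted fields and Theorem~5.2 of Biliotti--Jha--Johnson to conclude $s=k$ and that $L_1,\varphi_1$ are monomials. Your ``rank $\le 2$'' language silently assumes nondegeneracy and does not cover this case. Second, your concluding exponent comparison $p^s+1\equiv(p^k+1)p^j \pmod{p^m-1}$ is a legitimate substitute for the paper's appeal to non-isotopy of twisted fields (with $0<k,s\le\lfloor m/2\rfloor$ and $m/\gcd(m,k)$ odd it does force $k=s$), but it presupposes that no cancellation occurs among the several families of monomials on each side, which again is only justified after the monomial structure of the blocks has been fully established. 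As written, the heart of the argument is asserted rather than proved, and the one concrete mechanism offered for it is insufficient.
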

\begin{proof}
    Denote $f$ and $g$ as the corresponding planar functions from $\mathbb{P}_{k,\sigma}$ and $\mathbb{P}_{s,\tau}$.
    Since strong isotopism between $\mathbb{P}_{k,\sigma}$ and $\mathbb{P}_{s,\tau}$ is equivalent to the linear equivalence between $f$ and $g$, we assume that there exist linearized polynomials $L_1(x,y), L_2(x,y)$ and $L(x,y)$ with both $L(x,y)$ and $(L_1(x,y), L_2(x,y))$ invertible, such that
\begin{equation}\label{eq_P_strong_isotopism}
    L\left(L_1(x,y)^{p^k+1}+\alpha (L_2(x,y)^{p^k+1})^\sigma, L_1(x,y)L_2(x,y)\right)=(x^{p^s+1}+\alpha (y^{p^s+1})^\tau, xy)
\end{equation}
    For convenience, we denote $L_i(x,0)$ and $L_i(0,y)$ by $L_i(x)$ and $L'_i(y)$ respectively. We first prove that:\\
    \textbf{Claim 1:} If (\ref{eq_P_strong_isotopism}) holds, then $s=k$ and $L_i(x)$ and $L'_i(y)$ are monomials or zero, for $i=1,2$.

    Here we only prove the results for $L_i(x)$, by the symmetry, a similar proof can be derived for $L'_i(y)$.
    Let $y=0$, we have,
    \begin{align*}
        &\left(L_1(x)^{p^k+1}+\alpha (L_2(x)^{p^k+1})^\sigma, L_1(x)L_2(x)\right)\\
        =&L^{-1}(x^{p^s+1}, 0)\\
        =&(\varphi_1(x^{p^s+1}), \varphi_2(x^{p^s+1})),
    \end{align*}
    where $L_1(x)=\sum_{i=0}^{m-1}a_i x^{p^i}$, $L_2(x)=\sum_{i=0}^{m-1}b_i x^{p^i}$, $\varphi_1(x)=\sum_{i=0}^{m-1}c_i x^{p^i}$ and $\varphi_2(x)=\sum_{i=0}^{m-1}d_i x^{p^i}$ are linearized polynomials.
   We divide the following proof into two cases:
    \begin{enumerate}
      \item Neither of $L_1(x)$ or $L_2(x)$ equals $0$;
      \item $L_1(x)$ or $L_2(x)$ equals $0$.
    \end{enumerate}

    \textbf{Case(1):} Since $L_1(x)L_2(x)=\varphi_2(x^{p^s+1})$ and $s>0$, we have
    $$\left\{
      \begin{array}{ll}
        (a_ib_{i+s}+a_{i+s}b_i)=d_i, & \hbox{for any $i$;} \\
        a_ib_j+a_jb_i=0, & \hbox{$j\neq i\pm s$.}
      \end{array}
    \right.$$
    Assume that $d_u\neq 0$, then noticing that $a_ib_i=0$ for any $0\le i\le m-1$, we have $d_u=a_ub_{u+s}$ or $a_{u+s}b_u$.

    \textbf{(a)} If $a_u\neq 0$, then $b_u=0$ and for any $j\neq u\pm s$, we have
    $$a_u b_j+a_j b_u=0,$$
    which means that $b_j=0$, and $L_2(x)=b_{u-s}x^{p^{u-s}}+b_{u+s}x^{p^{u+s}}$. If $b_{u\pm s}\neq 0$, then we can also use a similar argument to prove that $L_1(x)=a_{u}x^{p^u}$. Furthermore, we have
    \begin{equation}\label{eq_P_strIso_1}
        \varphi_1(x^{p^s+1})=a_{u}^{p^k+1}(x^{p^k+1})^{p^{u}}+\alpha (b_{u-s}x^{p^{u-s}}+b_{u+s}x^{p^{u+s}})^{(p^k+1)\sigma}.
    \end{equation}
    The right side of (\ref{eq_P_strIso_1}) is
    \begin{align*}
        &\left(a_u^{p^k+1}x^{(p^k+1)p^u}+\alpha b_{u-s}^{(p^k+1)\sigma}x^{(p^k+1)\sigma p^{u-s}}+\alpha b_{u+s}^{(p^k+1)\sigma}x^{(p^k+1)\sigma p^{u+s}}\right)\\
        +&\alpha\left(b_{u-s}^{p^k}b_{u+s}x^{(p^{k-s}+p^s) p^u}+b_{u-s}b_{u+s}^{p^k}x^{(p^{-s}+p^{s+k}) p^u}\right)^{\sigma}
    \end{align*}
     which means that (\ref{eq_P_strIso_1}) does not hold, since $x^{(p^{k-s}+p^s)}$ and $x^{(p^{-s}+p^{s+k})}$ cannot be simultaneously written in the form $x^{(p^s+1)p^i}$ for some $i$ respectively. Therefore, one of $b_{u-s}$ and $b_{u+s}$ must be $0$.

     If $b_{u-s}=0$, then we can derive that $L_1(x)=a_{u}x^{p^u}+a_{u+2s}x^{p^{u+2s}}$. Similarly as (\ref{eq_P_strIso_1}), it can also be proved that $a_{u+2s}=0$ by symmetry basically, and we see that $L_1(x)$ and $L_2(x)$ are both monomials, and we have that
    \begin{equation}\label{eq_P_strIso_2}
        \varphi_1(x^{p^s+1})=a_{u}^{p^k+1}(x^{p^k+1})^{p^{u}}+\alpha (b_{u+s}x^{p^{u+s}})^{(p^k+1)\sigma}.
    \end{equation}
    When $s\neq k$, then (\ref{eq_P_strIso_2}) also cannot hold, otherwise  that means the presemifields on $\mathbb{F}_{p^m}$ defined by $x^{p^s+1}$ and $x^{p^k+1}$ are isotopic.

    If $b_{u+s}\neq 0$, then by symmetry we can get $L_1(x)=a_{u+s}x^{p^{u+s}}$ and $L_2(x)=b_{u}x^{p^u}$ and $s=k$.

    \textbf{(b)} Similarly as in \textbf{(a)}, if $b_u\neq 0$, by the symmetry of $L_1$ and $L_2$ in $L_1(x)L_2(x)=\varphi_2(x^{p^s+1})$, we can prove that $s=k$ and both $L_1(x)$ and $L_2(x)$ are monomials.

    \textbf{Case(2):}
    Assume that $L_1(x)\neq 0$ and $L_2(x)=0$, w.l.o.g, we have $L_1(x)^{p^k+1}=\varphi_1(x^{p^s+1})$. It cannot hold for $s\neq k$, since two generalized twisted fields are not isotopic. When $s=k$, according to Theorem 5.2 in \cite{Biliotti1999}, we know that $L_1(x)$ and $\varphi_1(x)$ are both linearized monomials.

    Therefore, we have proved the \textbf{Claim 1}.

    Now, for $k=s$, we know that $L_1(x,y)$ and $L_2(x,y)$ are both linearized binomials or monomials. Assume that the possible degrees of $x$ in $L_1$ and $L_2$ are $p^u$ and $p^{u+k}$, those of $y$ are $p^v$ and $p^{v+k}$, then there are four possible combinations of them to form $L_1$ and $L_2$:
    \begin{enumerate}
      \item $L_1:(x^{p^u}, y^{p^v}), L_2:(x^{p^{u+k}}, y^{p^{v+k}})$;
      \item $L_1:(x^{p^u}, y^{p^{v+k}}), L_2:(x^{p^{u+k}}, y^{p^v})$;
      \item $L_1:(x^{p^{u+k}}, y^{p^v}), L_2:(x^{p^u}, y^{p^{v+k}})$;
      \item $L_1:(x^{p^{u+k}}, y^{p^{v+k}}), L_2:(x^{p^u}, y^{p^v})$.
    \end{enumerate}
    First, let us assume that $L_1$ and $L_2$ are both binomials. In fact, noticing that there is only $xy$ on the right side of (\ref{eq_P_strong_isotopism}), both $(1)$ and $(4)$ are not feasible, and there must be $u=v$ for $(2)$ and $(3)$. Hence, firstly we consider case (2): $L_1(x,y)=a_{u}x^{p^{u}}+a'_{u+k}y^{p^{u+k}}$ and $L_2(x,y)=b_{u+k}x^{p^{u+k}}+b'_{u}y^{p^{u}}$. Let
    $$L^{-1}=\left(
                  \begin{array}{cc}
                    \varphi_1 & \varphi_3 \\
                    \varphi_2 & \varphi_4 \\
                  \end{array}
                \right),
    $$
    then by expanding the first component of (\ref{eq_P_strong_isotopism}),
    we have
    $$L_1(x,y)^{p^k+1}+\alpha (L_2(x,y)^{p^k+1})^\sigma=\varphi_1(x^{p^k+1}+\alpha  (y^{p^k+1})^\tau)+\varphi_3(xy),$$
    The terms are $a_ux^{p^u}a_{u+k}'^{p^k}y^{p^{u+2k}}$ and $(b_{u+k}^{p^k}x^{p^{u+2k}}b'_uy^{p^u})^\sigma$ occur on the left side, but they are impossible to appear on the right side. That means $L_1(x)$ and $L_2(x)$ are both monomials with the same degree.

    First, we assume that $a'_{u+k}=b_{u+k}=0$ and we have
    $$(a_ux^{p^u})^{p^k+1}+\alpha ((b'_uy^{p^u})^{p^k+1})^{\sigma}=\varphi_1(x^{p^k+1}+\alpha  (y^{p^k+1})^\tau)+\varphi_3(xy).$$
    Obviously, $\varphi_1$ is a monomial and $\varphi_3\equiv0$. Therefore, one necessary condition for the upper equation hold is that $\sigma=\tau$.

    Secondly, by assuming that $a'_{u}=b_{u}=0$ and $w=u+k$, we have that $L_1(x,y)=a'_{w}y^{p^{w}}$, $L_2(x,y)=b_{w}x^{p^{w}}$, and
    \begin{equation*}
      (a'_wy^{p^w})^{p^k+1}+\alpha ((b_wx^{p^w})^{p^k+1})^{\sigma}=\varphi_1(x^{p^k+1}+\alpha  (y^{p^k+1})^\tau)+\varphi_3(xy).
    \end{equation*}
    It is also easy to see that $\varphi_1(x)$ is a monomial and $\varphi_3(x)=0$. Hence, one necessary condition that the upper equation holds is $\sigma=\tau$ and $\sigma^2$ is identity.

    For case (3), we can derive the same result as case (2).
    Hence, this completes the proof.\qed
\end{proof}

To investigate the isotopism between $\mathbb{P}_{k,\sigma}$ and $\mathbb{P}_{s,\tau}$ further, we need the following results from \cite{Coulter2008}:
\begin{theorem}(Coulter and Henderson)\label{th_CoulterHenderson}
    Let $F_1 =(\mathbb{F}_q, +, \star)$ and $F_2 =(\mathbb{F}_q, +, *)$ be isotopic commutative semifields. Then every isotopism $(M, N, L)$ between $F_1$ and $F_2$ satisfies either
    \begin{enumerate}[(1)]
      \item $M=N$, or
      \item $M(x)\equiv \gamma\star N(x) \mod (x^q-x)$, where $\gamma \in N_m(F_1)$.
    \end{enumerate}
\end{theorem}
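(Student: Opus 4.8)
The plan is to recover both $M$ and $N$ from one well-chosen substitution into the defining equation of the isotopism, using commutativity twice, and then to observe that the resulting scalar is automatically forced into the middle nucleus. Throughout, write the isotopism $(M,N,L)$ between $F_1$ and $F_2$ as $M(x)\star N(y)=L(x*y)$ for all $x,y\in\mathbb{F}_q$, where $\star$ is the product of $F_1$ and $*$ that of $F_2$; both products are commutative, and left multiplication by a fixed semifield element is $\mathbb{F}_p$-linear, hence agrees with a linearized polynomial modulo $x^q-x$, so the identities below make sense as identities of linear maps.

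First I would use commutativity of $F_2$: from $L(x*y)=L(y*x)$ together with the isotopism equation applied to both orders of the arguments,
$$M(x)\star N(y)=M(y)\star N(x)\qquad\text{for all }x,y\in\mathbb{F}_q.$$
Let $e$ be the identity of $F_1$ and set $y_0:=N^{-1}(e)$; this is well defined since $N$ is bijective, and $y_0\neq 0$ because $N(0)=0\neq e$. Putting $y=y_0$ in the displayed equation and using that $e$ is a two-sided identity for $\star$ gives $M(x)=M(x)\star e=M(y_0)\star N(x)$. Writing $\gamma:=M(y_0)$ (which is nonzero, as $M$ is a bijection fixing $0$) we obtain $M(x)\equiv\gamma\star N(x)\pmod{x^q-x}$, and clearly $\gamma=e$ if and only if $M=N$.

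Next I would substitute $M(x)=\gamma\star N(x)$ back into the symmetrized equation, getting $(\gamma\star N(x))\star N(y)=(\gamma\star N(y))\star N(x)$; since $N$ is onto, this says $(\gamma\star a)\star b=(\gamma\star b)\star a$ for all $a,b\in F_1$. Now I would invoke commutativity of $F_1$: the right-hand side equals $a\star(\gamma\star b)$, and on the left $\gamma\star a=a\star\gamma$, so $(a\star\gamma)\star b=a\star(\gamma\star b)$ for all $a,b$ — precisely the assertion $\gamma\in N_m(F_1)$. Hence, if $\gamma=e$ then $M=N$, which is alternative (1); otherwise $M(x)\equiv\gamma\star N(x)\pmod{x^q-x}$ with $\gamma\in N_m(F_1)$, which is alternative (2).

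There is no serious obstacle here; the care needed is entirely bookkeeping: choosing the orientation of the isotopism so that $\star$ (the product appearing in the conclusion) is the one multiplying the images of $M$ and $N$, tracking which commutativity is used at which step, and verifying that $\gamma$ is a genuine nonzero element of $F_1$ so that $\gamma\star N(x)$ is meaningful. The only mildly non-obvious point is that $\gamma\in N_m(F_1)$ need not be assumed — it is exactly what remains after combining the symmetrization coming from commutativity of $F_2$ with the commutativity of $F_1$.
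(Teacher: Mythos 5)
Your argument is correct: symmetrizing $M(x)\star N(y)=L(x*y)$ via commutativity of $*$, evaluating at $y_0=N^{-1}(e)$ to get $M=\gamma\star N$ with $\gamma=M(y_0)\neq 0$, and then feeding this back into the symmetrized identity together with commutativity of $\star$ does yield exactly the middle-nucleus condition $(a\star\gamma)\star b=a\star(\gamma\star b)$, with case (1) being the sub-case $\gamma=e$. Note that the paper itself gives no proof of this statement — it is quoted from Coulter and Henderson \cite{Coulter2008} — and your argument is essentially the standard one from that reference, so there is nothing to compare beyond saying it matches.
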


By replacing $N(x)$ by $x'$ and $N(y)$ by $y'$ in Theorem \ref{th_CoulterHenderson}(2), we have $(\gamma\star x')\star y'=L(N^{-1}(x')*N^{-1}(y'))$. Since we have considered the strong isotopism between $\mathbb{P}_{k,\sigma}$ and $\mathbb{P}_{s,\tau}$ already, we only need to pick up some elements in $N_m(\mathbb{S}_{k,\sigma})$ and $N_m(\mathbb{S}_{s,\tau})$ respectively, and take $N$ as the identity mapping to investigate case $(2)$ in Theorem \ref{th_CoulterHenderson}.

When $\sigma$ is non-trivial, by Theorem \ref{th_S_Nm}, we know that $N_m(\mathbb{S}_{k,\sigma})=\mathbb{F}_{p^m}\cap \mathbb{F}_{p^k}=\{(c,0)\mid c\in\mathbb{F}_{p^l} \}$ with  $l=\gcd(k,m)$, and
$$L^{-1}\left((a,b)*(c,0)\right)=L^{-1}\left(a\circ_k c, bc\right)=(ac,bc).$$
Therefore, we can define a new semifield multiplication as
\begin{align*}
  &(a,b)\otimes (e,f)\\
  =&L^{-1}\left((a,b)*(c,0)\right)*(e,f)\\
  =&  \left((a\circ_k e)c+\alpha c^\sigma(b\circ_k f)^\sigma, c(af+be)\right),
\end{align*}
and the corresponding planar function $f$ can be written as
$$f(x,y)=2\cdot(cx^{p^k+1}+\alpha c^{\sigma}(y^{p^k+1})^\sigma, cxy),$$
which is equivalent to the one defined by $\mathbb{P}_{k,\sigma}$, when $c\neq 0$. Therefore, by Theorem \ref{th_P_strong_isotopism}, we have:
\begin{corollary}\label{cr_strong_iso_Pk_sigma}
    Let $\mathbb{P}_{k,\sigma}$ be the presemifield with the multiplication $*$ defined as in (\ref{eq_mola_multi}) on $\mathbb{F}_{p^{2m}}$ with $\sigma$ non-trivial. Let $0< k, s\le \lfloor\frac{m}{2}\rfloor$ and $0\le r, t\le \lfloor\frac{m}{2}\rfloor$, where $\sigma(x)=x^{p^r}$ and $\tau(x)=x^{p^t}$. If $(k,\sigma)\neq(s, \tau)$, then $\mathbb{P}_{k,\sigma}$ and $\mathbb{P}_{s,\tau}$ are not isotopic.
\end{corollary}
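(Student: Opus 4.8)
The plan is to deduce the corollary from the Coulter--Henderson structure theorem for isotopisms of commutative semifields (Theorem~\ref{th_CoulterHenderson}) together with the non-strong-isotopism result already established in Theorem~\ref{th_P_strong_isotopism}. Since a commutative presemifield and any semifield derived from it are strongly isotopic, $\mathbb{P}_{k,\sigma}$ and $\mathbb{P}_{s,\tau}$ are isotopic (respectively, strongly isotopic) precisely when the semifields $\mathbb{S}_{k,\sigma}$ and $\mathbb{S}_{s,\tau}$ of (\ref{eq_mola_semifield_multi}) are; so I would work with the latter. Assume, for contradiction, that $\mathbb{S}_{k,\sigma}$ and $\mathbb{S}_{s,\tau}$ are isotopic, and fix an isotopism $(M,N,L)$ from $\mathbb{S}_{k,\sigma}$ to $\mathbb{S}_{s,\tau}$. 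By Theorem~\ref{th_CoulterHenderson}, either $M=N$, in which case $(M,N,L)$ is a strong isotopism and Theorem~\ref{th_P_strong_isotopism} immediately gives a contradiction, or $M(x)\equiv\gamma\star N(x)$ for some nonzero $\gamma$ in the middle nucleus of $\mathbb{S}_{k,\sigma}$ (here $\star$ denotes the product of $\mathbb{S}_{k,\sigma}$); only the second alternative needs work.

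For the second alternative, I would substitute $N^{-1}$ of each variable into the isotopism equation. This shows that the new multiplication $x\otimes y:=(\gamma\star x)\star y$ defines a presemifield on $\mathbb{F}_{p^{2m}}$ which is strongly isotopic to $\mathbb{S}_{s,\tau}$, hence to $\mathbb{P}_{s,\tau}$, via the triple $(N^{-1},N^{-1},L^{-1})$. On the other hand, by Theorem~\ref{th_S_Nm}(2) the middle nucleus of $\mathbb{S}_{k,\sigma}$ corresponds, under the map of (\ref{eq_mola_semifield_multi}), to $\{(c,0):c\in\mathbb{F}_{p^l}\}$ with $l=\gcd(m,k)$; using $L^{-1}\bigl((a,b)*(c,0)\bigr)=(ac,bc)$ (with $*$ the product of $\mathbb{P}_{k,\sigma}$) one writes $\otimes$ out explicitly, exactly as in the discussion preceding the corollary, and finds that its associated planar function is $(x,y)\mapsto 2\bigl(c\,x^{p^k+1}+\alpha c^{\sigma}(y^{p^k+1})^{\sigma},\,c\,xy\bigr)$ for the corresponding nonzero $c\in\mathbb{F}_{p^l}$.

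The decisive step is then to prove that, for $c\neq0$, this planar function is linearly equivalent to the planar function of $\mathbb{P}_{k,\sigma}$, namely $(x,y)\mapsto\bigl(x^{p^k+1}+\alpha(y^{p^k+1})^{\sigma},\,xy\bigr)$. I would establish this by a diagonal change of variables: precompose with $(x,y)\mapsto(x,\mu y)$ and postcompose with $(u,v)\mapsto(\rho u,\nu v)$, and match the three coefficients. This forces $\rho=(2c)^{-1}$, $\nu=(2c\mu)^{-1}$, and the single nontrivial requirement $\mu^{(p^k+1)\sigma}=c^{1-\sigma}$. The latter is solvable for some $\mu\in\mathbb{F}_{p^m}^{*}$ because the $(p^k+1)$-th powers on $\mathbb{F}_{p^m}^{*}$ are exactly the nonzero squares by Proposition~\ref{pro_gcd(p^m-1,p^k+1)}(1), a field automorphism maps squares to squares, and $c^{1-\sigma}=c^{\,1-p^{r}}$ is a square since $p^{r}-1$ is even for odd $p$. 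Consequently $\otimes$ is strongly isotopic to $\mathbb{P}_{k,\sigma}$, and transitivity of strong isotopism then forces $\mathbb{P}_{k,\sigma}$ and $\mathbb{P}_{s,\tau}$ to be strongly isotopic, contradicting Theorem~\ref{th_P_strong_isotopism}. I expect the main obstacle to be precisely this reduction in the second alternative: pinning down the twisted multiplication $\otimes$ coming from Theorem~\ref{th_CoulterHenderson}(2) and verifying its equivalence with $\mathbb{P}_{k,\sigma}$, in particular the solvability of $\mu^{(p^k+1)\sigma}=c^{1-\sigma}$; the rest is a short bookkeeping argument on top of Theorems~\ref{th_CoulterHenderson} and~\ref{th_P_strong_isotopism}.
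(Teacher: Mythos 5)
Your proposal is correct and follows essentially the same route as the paper: invoke Theorem~\ref{th_CoulterHenderson} to reduce the general isotopism to either a strong isotopism (killed by Theorem~\ref{th_P_strong_isotopism}) or a middle-nucleus twist $(\gamma\star x)\star y$, then show via $N_m(\mathbb{S}_{k,\sigma})=\{(c,0):c\in\mathbb{F}_{p^l}\}$ that the twisted product's planar function $2(cx^{p^k+1}+\alpha c^{\sigma}(y^{p^k+1})^{\sigma},cxy)$ is equivalent to that of $\mathbb{P}_{k,\sigma}$, so case (2) also collapses to strong isotopism. Your explicit diagonal change of variables with the solvability check for $\mu^{(p^k+1)\sigma}=c^{1-\sigma}$ correctly fills in a step the paper only asserts.
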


For the case that $\sigma$ is trivial, we can define a commutative presemifield $\tilde{\mathbb{P}}_k$ by the following multiplication:
$$x\otimes y =L^{-1}(x*\beta)*y,$$
where $L(a,b)=(a,b)*(1,0)=(a+a^{p^k},b)$ and $L(\beta)\in N_m(\mathbb{S}_{k})$. As proved in Theorem \ref{th_S_Nm}, $\beta=(c,d)\in \mathbb{F}_{p^{l}}^2$, where $l=\gcd(m,k)$, and we have
\begin{align*}
    &L^{-1}\left((a,b)*(c,d)\right)\\
    =&L^{-1}\left(c(a+a^{p^k})+\alpha d(b+b^{p^k}), ad+bc)\right)\\
    =&(ac+\alpha bd,ad+bc),
\end{align*}
which means that the multiplication of $\tilde{\mathbb{P}}_k$ can be written as
\begin{align*}
    &(a,b)\otimes(e,f)\\
    =&\left(c(a\circ_k e+b\circ_k f\alpha)+\alpha d(b\circ_k e+a\circ_k f), c(af+be)+d(ae+bf\alpha)\right).
\end{align*}
According to Theorem \ref{th_S_Nl}, if $d=0$, then $\beta=(c,d)$ will correspond to some element in $N_l(\mathbb{S}_{k})$, which leads to strong isotopism. Without loss of generality, assume that $d=1$,
we can define $(a,b)\diamond(e,f)$ as
\begin{equation}\label{eq_Mi_multi}
    \left(c(a\circ_k e+b\circ_k f\alpha)+\alpha (b\circ_k e+a\circ_k f), c(af+be)+(ae+bf\alpha)\right),
\end{equation}
and the corresponding planar function $f$ is
$$f(x,y)=(2c(x^{p^k+1}+\alpha y^{p^k+1})+2\alpha x\circ_k y, 2cxy+x^2+\alpha y^2),$$
which is equivalent to
\begin{equation}\label{eq_mola_PNF_2}
    f(x,y)=(2cxy+x^2+\alpha y^2, c(x^{p^k+1}+\alpha y^{p^k+1})+\alpha x\circ_k y).
\end{equation}

Similarly as the proof of Theorem \ref{th_P_strong_isotopism}, we can get the following results:
\begin{theorem}\label{th_P_time_r}
    If $c^2-\alpha$ is a non-square in $\mathbb{F}_{p^l}$, where $l=\gcd(m,k)$, then the presemifield defined by $\diamond$ in (\ref{eq_Mi_multi}) is not strongly isotopic with $\mathbb{P}_{s}$, for any $s>0$.
\end{theorem}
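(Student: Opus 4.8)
The plan is to run the opening argument of the proof of Theorem~\ref{th_P_strong_isotopism}; the one genuinely new ingredient is that the binary quadratic form $Q(u,v)=u^{2}+2cuv+\alpha v^{2}$, which is the first component of the planar function \eqref{eq_mola_PNF_2} of the $\diamond$-presemifield, is \emph{anisotropic} over $\mathbb{F}_{p^{m}}$ precisely when $c^{2}-\alpha$ is a non-square in $\mathbb{F}_{p^{l}}$. Since strong isotopism of commutative presemifields is the same as linear equivalence of the associated planar DO functions, and since \eqref{eq_mola_PNF_2} is (linearly equivalent to) the planar function of the presemifield $\diamond$ of \eqref{eq_Mi_multi}, it suffices to assume that there are an invertible linearized map $L$ on $\mathbb{F}_{p^{2m}}$ and linearized polynomials $L_{1}(x,y),L_{2}(x,y)$ with $(L_{1},L_{2})$ invertible such that
\[
L\bigl(Q(L_{1},L_{2}),\,C(L_{1},L_{2})\bigr)=\bigl(x^{p^{s}+1}+\alpha y^{p^{s}+1},\,xy\bigr),
\]
where $C(u,v)=c(u^{p^{k}+1}+\alpha v^{p^{k}+1})+\alpha(u\circ_{k}v)$ and the right-hand side is the planar function of $\mathbb{P}_{s}$ after the harmless division by $2$; then I would derive a contradiction. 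Writing $L^{-1}$ as a matrix of linearized polynomials with first row $\varphi_{1},\varphi_{3}$ (so $L^{-1}(u,v)=(\varphi_{1}(u)+\varphi_{3}(v),\dots)$) and comparing first components, exactly as in Theorem~\ref{th_P_strong_isotopism}, yields
\[
L_{1}(x,y)^{2}+2c\,L_{1}(x,y)L_{2}(x,y)+\alpha L_{2}(x,y)^{2}=\varphi_{1}\bigl(x^{p^{s}+1}+\alpha y^{p^{s}+1}\bigr)+\varphi_{3}(xy).
\]

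Next I would specialise $y=0$. Writing $L_{1}(x,0)=\sum_{i}a_{i}x^{p^{i}}$ and $L_{2}(x,0)=\sum_{i}b_{i}x^{p^{i}}$, the right-hand side collapses to $\varphi_{1}(x^{p^{s}+1})$, a sum of monomials $x^{p^{i+s}+p^{i}}$; because $0<s\le\lfloor m/2\rfloor$ we have $i+s\not\equiv i\pmod{m}$, so none of these monomials has the form $x^{2p^{j}}$. Comparing the coefficient of $x^{2p^{i}}$ on both sides therefore gives $a_{i}^{2}+2ca_{i}b_{i}+\alpha b_{i}^{2}=0$ for every $i$, and the symmetric specialisation $x=0$ (the right-hand side becoming $\varphi_{1}(\alpha y^{p^{s}+1})$, again with no $y^{2p^{j}}$ term) gives the same identity for the coefficients of $L_{1}(0,y)$ and $L_{2}(0,y)$.

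The endgame is the anisotropy step. Here $c,\alpha\in\mathbb{F}_{p^{l}}$ with $l=\gcd(m,k)$, and $m/l$ is odd by the standing hypothesis $\frac{m}{\gcd(m,k)}$ odd; hence the norm exponent $(p^{m}-1)/(p^{l}-1)=1+p^{l}+\cdots+p^{(m/l-1)l}$ is odd, so a non-square of $\mathbb{F}_{p^{l}}$ stays a non-square in $\mathbb{F}_{p^{m}}$. Thus $c^{2}-\alpha$ is a non-square in $\mathbb{F}_{p^{m}}$, the polynomial $T^{2}+2cT+\alpha$ is irreducible over $\mathbb{F}_{p^{m}}$, and $Q(u,v)=u^{2}+2cuv+\alpha v^{2}$ vanishes on $\mathbb{F}_{p^{m}}^{2}$ only at $(0,0)$. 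Feeding this into $a_{i}^{2}+2ca_{i}b_{i}+\alpha b_{i}^{2}=0$ forces $a_{i}=b_{i}=0$ for all $i$, i.e.\ $L_{1}(x,0)=L_{2}(x,0)=0$; symmetrically $L_{1}(0,y)=L_{2}(0,y)=0$, and by additivity in each variable $L_{1}\equiv L_{2}\equiv 0$, contradicting the invertibility of $(L_{1},L_{2})$.

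I expect the only genuinely non-formal point to be the transfer of the non-square property from $\mathbb{F}_{p^{l}}$ to $\mathbb{F}_{p^{m}}$, which is exactly where the standing assumption that $m/\gcd(m,k)$ is odd is used; the rest is routine Dembowski--Ostrom exponent bookkeeping and is in fact lighter than in Theorem~\ref{th_P_strong_isotopism}, since once the diagonal coefficients of the quadratic form are forced to vanish, anisotropy of $Q$ annihilates the whole ``$x$-part'' and ``$y$-part'' of $L_{1},L_{2}$ in one stroke rather than leaving individual monomials to be tracked through further case analysis.
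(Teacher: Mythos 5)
Your proposal is correct and follows essentially the same route as the paper's own proof: set $y=0$, compare the coefficients of $x^{2p^i}$ (which cannot occur in $\varphi_1(x^{p^s+1})$ since $s\not\equiv 0 \pmod m$) to get $a_i^2+2ca_ib_i+\alpha b_i^2=0$, and use that $c^2-\alpha$ remains a non-square in $\mathbb{F}_{p^m}$ because $m/\gcd(m,k)$ is odd to force $a_i=b_i=0$ and reach a contradiction with invertibility. The only cosmetic difference is that you also specialise $x=0$ and contradict the invertibility of $(L_1,L_2)$, whereas the paper stops after the $y=0$ specialisation; both finishes are valid.
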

\begin{proof}
    Assume that the two presemifield are strongly isotopic, then we have linearized polynomials $L_1(x,y), L_2(x,y)$ and $L(x,y)$, where $L(x,y)$ is a permutation such that
    \begin{align*}
        &L\circ \left(
                  \begin{array}{c}
                    2cL_1(x,y)L_2(x,y)+L_1(x,y)^{2}+\alpha L_2(x,y)^{2} \\
                    c(L_1(x,y)^{p^k+1}+\alpha L_2(x,y)^{p^k+1})+\alpha L_1(x,y)\circ_kL_2(x,y) \\
                  \end{array}
                \right)^T\\
        =&(x^{p^s+1}+\alpha y^{p^s+1}, xy).
    \end{align*}
    Let $y=0$, and we denote $L_i(x,0)$ by $L_i(x)$, for convenience. We get,
    \begin{align*}
    &\left(
        \begin{array}{c}
          2cL_1(x)L_2(x)+L_1(x)^2+\alpha L_2(x)^2 \\
          c(L_1(x)^{p^k+1}+\alpha L_2(x)^{p^k+1})+\alpha L_1(x)\circ_kL_2(x) \\
        \end{array}
     \right)^T\\
    =&L^{-1}(x^{p^s+1}, 0)=(\varphi_1(x^{p^s+1}), \varphi_2(x^{p^s+1})),
    \end{align*}
    where $\varphi_1(x), \varphi_2(x)$ are linearized polynomials. Let $L_1(x)=\sum_{i=0}^{m-1}a_ix^{p^i}$, $L_2(x)=\sum_{i=0}^{m-1}b_ix^{p^i}$ and $\varphi_1(x)=\sum_{i=0}^{m-1}c_ix^{p^i}$, then
    \begin{align*}
        &L_1(x)^2+\alpha L_2(x)^2 + 2cL_1(x)L_2(x)\\
        =& \sum_{i>j}2(a_ia_j+\alpha b_ib_j+ca_ib_j+ca_jb_i)x^{p^i+p^j}+\sum_{i=0}^{m-1}(a_i^2+\alpha b_i^2+2ca_ib_i)x^{2p^i}.
    \end{align*}
    Since $s\neq 0$, by comparing the equation above with $\varphi_1(x^{p^s+1})$, we have that
    $$a_i^2+\alpha b_i^2+2ca_ib_i=0, \hbox{  for any } 0\le i\le m-1,$$
    which can also be written as,
    $$(a_i+c b_i)^2+(\alpha-c^2)b_i^2=0, \hbox{  for any } 0\le i\le m-1.$$
    If $c^2-\alpha$ is a non-square in $\mathbb{F}_{p^l}$, then it is also a non-square in $\mathbb{F}_{p^m}$, since $\frac{m}{l}$ is odd.  Hence the equation above has no solution. Therefore, we prove the claim in our theorem. \qed
\end{proof}

It is well-known that there always exist some $c\in\mathbb{F}_{p^l}$ such that $c^2-\alpha$ is a non-square in $\mathbb{F}_{p^l}$, where $l=\gcd(m,k)$ and $\alpha\in\mathbb{F}_{p^l}$ is also a non-square. Therefore, from Theorem \ref{th_P_strong_isotopism} and Theorem \ref{th_P_time_r}, we have

\begin{corollary}
    For every $k$, the semifield $\mathbb{S}_{k}$ defines two inequivalent planar functions over $\mathbb{F}_{p^{2m}}$.
\end{corollary}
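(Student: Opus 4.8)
The plan is to exhibit two explicit planar functions, both attached to presemifields isotopic to $\mathbb{S}_k$, and to check that they are not linearly equivalent. The first, call it $f_1$, is the planar function attached to $\mathbb{P}_k$: by the construction (\ref{eq_mola_semifield_multi}) the semifield $\mathbb{S}_k$ is \emph{strongly} isotopic to $\mathbb{P}_k$, so $f_1$ is, up to linear equivalence, the planar function of $\mathbb{S}_k$. The second, call it $f_2$, is the one written down in (\ref{eq_mola_PNF_2}), attached to the presemifield $\diamond$ of (\ref{eq_Mi_multi}). Here $\diamond$ is obtained from $\mathbb{S}_k$ by multiplying on one side by the middle-nucleus element $L(\beta)$ with $\beta=(c,1)$ --- precisely the operation sanctioned by Theorem \ref{th_CoulterHenderson}(2) --- so $\diamond$ is isotopic to $\mathbb{S}_k$, and $f_2$ is genuinely one of the planar functions defined by $\mathbb{S}_k$. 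Taking the second coordinate of $\beta$ to be nonzero (one may normalise it to $1$) is what keeps $L(\beta)$ in the middle nucleus but outside the left nucleus, so that $\diamond$ need not be merely strongly isotopic to $\mathbb{S}_k$.

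First I would fix the non-square $\alpha\in\mathbb{F}_{p^l}$ with $l=\gcd(m,k)$ and use the well-known counting fact recorded just before the statement to pick $c\in\mathbb{F}_{p^l}$ for which $c^2-\alpha$ is again a non-square in $\mathbb{F}_{p^l}$. For this choice of $c$, Theorem \ref{th_P_time_r} applies and shows that $\diamond$ is not strongly isotopic to $\mathbb{P}_s$ for any $s>0$; in particular it is not strongly isotopic to $\mathbb{P}_k$, hence not to $\mathbb{S}_k$. Then I would translate this back into the language of functions via the equivalence dictionary recalled in the introduction: for commutative presemifields of odd characteristic, strong isotopism coincides with linear equivalence of the associated planar DO functions, and for planar DO functions linear equivalence is the same as EA-equivalence and CCZ-equivalence. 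Thus ``$\diamond$ not strongly isotopic to $\mathbb{P}_k$'' is exactly ``$f_2$ and $f_1$ are inequivalent planar functions'', and combining this with Theorem \ref{th_P_strong_isotopism} (as indicated in the discussion preceding the statement) yields the asserted pair of inequivalent planar functions defined by $\mathbb{S}_k$, for every admissible $k$.

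Since the substantive inequivalence argument is already carried out in Theorem \ref{th_P_time_r}, essentially all that remains is bookkeeping, and I do not expect a real obstacle. The one point to verify with a little care is that $\beta=(c,1)$ really lies in $N_m(\mathbb{S}_k)\setminus N_l(\mathbb{S}_k)$ --- so that $\diamond$ is a genuinely different presemifield and not one strongly isotopic to $\mathbb{S}_k$ --- which is immediate from Theorems \ref{th_S_Nm} and \ref{th_S_Nl}: having $c,1\in\mathbb{F}_{p^l}$ places $\beta$ in the middle nucleus, while its nonzero second coordinate keeps it out of the (left) nucleus. The only other thing to confirm is the existence of the required $c$, which is the standard fact about squares in finite fields cited above. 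So the heavy lifting is all in Theorems \ref{th_P_strong_isotopism} and \ref{th_P_time_r}, and the corollary is their assembly.
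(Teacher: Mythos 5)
Your proposal is correct and follows essentially the same route as the paper: the paper's own justification is precisely to pick $c\in\mathbb{F}_{p^l}$ with $c^2-\alpha$ a non-square, note that the presemifield $\diamond$ of (\ref{eq_Mi_multi}) arises from a middle-nucleus element $\beta=(c,1)$ of $\mathbb{S}_k$ and hence yields a planar function defined by $\mathbb{S}_k$, and then invoke Theorem \ref{th_P_time_r} (together with Theorem \ref{th_P_strong_isotopism}) and the dictionary between strong isotopism and linear equivalence of planar DO functions. Your additional remarks about $\beta$ lying in $N_m(\mathbb{S}_k)\setminus N_l(\mathbb{S}_k)$ match the paper's discussion preceding (\ref{eq_Mi_multi}) and introduce no new gap.
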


The total number of non-isotopic semifields and inequivalent planar functions defined by $\mathbb{S}_{k,\sigma}$ can also be counted:
\begin{corollary}\label{cr_count_semi}
    Let $\mathbb{S}_{k,\sigma}$ be the semifield with the multiplication $\star$ defined as in (\ref{eq_mola_multi}) on $\mathbb{F}_{p^{2m}}$, where $m=2^e\mu$ with $\gcd(\mu, 2)=1$, then  $\mathbb{S}_{k,\sigma}$  defines
    \begin{enumerate}
      \item $\lfloor\frac{\mu}{2}\rfloor\cdot\lceil\frac{m}{2}\rceil$ non-isotopic semifields;
      \item $\lfloor\frac{\mu}{2}\rfloor\cdot(\lceil\frac{m}{2}\rceil+1)$ inequivalent planar functions.
    \end{enumerate}
\end{corollary}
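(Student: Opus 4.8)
The plan is to translate the two quantities into counts of equivalence classes of the parameter pair $(k,\sigma)$. By Albert's theorem non-isotopic semifields correspond to non-isotopic presemifields, so part (1) asks for the number of isotopism classes among the $\mathbb{P}_{k,\sigma}$; since all these presemifields are commutative of odd characteristic, two of them yield linearly equivalent (i.e.\ \emph{equivalent}, in the terminology fixed above) planar functions exactly when they are strongly isotopic, and a single semifield may contribute several inequivalent planar functions through its middle-nucleus twists (in the sense of Theorem~\ref{th_CoulterHenderson}), so part (2) asks for the number of strong-isotopism classes among the $\mathbb{P}_{k,\sigma}$ together with their twists. Thus the heart of the argument is to pin down a system of representatives for the admissible pairs $(k,\sigma)$ and then to count, invoking Theorem~\ref{th_P_strong_isotopism}, Corollary~\ref{cr_strong_iso_Pk_sigma}, Theorems~\ref{th_S_Nm} and \ref{th_S_Nl}, and Theorem~\ref{th_P_time_r} with its corollary, to guarantee that distinct representatives really are inequivalent and to tally the planar functions each one produces.

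First I would deal with $k$. By Theorem~\ref{th_Mola_presemifield} the construction requires $m/\gcd(m,k)$ to be odd; writing $m=2^{e}\mu$ with $\mu$ odd this is equivalent to $2^{e}\mid k$. The reductions recorded after Theorem~\ref{th_Mola_presemifield} (the cyclotomic identification $p^{k}+1\equiv p^{u}(p^{s}+1)\bmod p^{m}-1$) let us restrict to $1\le k\le\lfloor m/2\rfloor$, and within that interval this identification only pairs $k$ with $m-k$, which lies outside it; so the admissible $k$ are exactly the multiples of $2^{e}$ in $[1,\lfloor m/2\rfloor]$. A direct count, treating $e=0$ and $e\ge 1$ separately and using that $\mu$ is odd, shows there are precisely $\lfloor\mu/2\rfloor$ of them.

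Next, for a fixed admissible $k$, I would enumerate the automorphisms $\sigma(x)=x^{p^{r}}$. One checks that the genuinely distinct presemifields occur for $r\in\{0,1,\dots,\lceil m/2\rceil-1\}$: when $m$ is odd this coincides with the normalized range $0\le r\le\lfloor m/2\rfloor$, while when $m$ is even the extra value $r=m/2$ (the involutory automorphism) yields a presemifield isotopic to one already obtained — this collapse is the single point needing a genuine computation. For the lower bound, Theorem~\ref{th_P_strong_isotopism} shows distinct admissible pairs $(k,\sigma)$ give presemifields that are not strongly isotopic, and Corollary~\ref{cr_strong_iso_Pk_sigma} upgrades this to non-isotopism whenever $\sigma$ is nontrivial; that the trivial-$\sigma$ semifield $\mathbb{S}_{k}$ is not isotopic to any $\mathbb{S}_{k,\sigma}$ with $\sigma$ nontrivial follows by comparing nuclei via Theorems~\ref{th_S_Nm} and \ref{th_S_Nl}. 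Hence for each of the $\lfloor\mu/2\rfloor$ admissible values of $k$ we obtain exactly $\lceil m/2\rceil$ pairwise non-isotopic semifields, giving $\lfloor\mu/2\rfloor\lceil m/2\rceil$ in all and proving (1).

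Finally, for the planar-function count I would argue that each $\mathbb{S}_{k,\sigma}$ with $\sigma$ nontrivial contributes exactly one equivalence class of planar functions: by Theorem~\ref{th_S_Nm} its middle nucleus acts by scalars $(c,0)$, and the computation preceding Corollary~\ref{cr_strong_iso_Pk_sigma} shows every corresponding twist is linearly equivalent to the original planar function, so by Theorem~\ref{th_CoulterHenderson} nothing new arises. The case $\sigma=\mathrm{id}$ is the exception: by the corollary following Theorem~\ref{th_P_time_r} one may choose $c\in\mathbb{F}_{p^{l}}$ with $c^{2}-\alpha$ a non-square, and then $\mathbb{S}_{k}$ contributes two inequivalent planar functions, the DO function of $\mathbb{P}_{k}$ and the one in (\ref{eq_mola_PNF_2}), while all other parameter pairs give functions in distinct classes by Theorem~\ref{th_P_strong_isotopism} (for the twisted function of $\mathbb{S}_k$, because it defines a semifield isotopic to $\mathbb{S}_k$). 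So per admissible $k$ we obtain $2+(\lceil m/2\rceil-1)=\lceil m/2\rceil+1$ inequivalent planar functions, hence $\lfloor\mu/2\rfloor(\lceil m/2\rceil+1)$ altogether, proving (2). I expect the main obstacle to be the even-$m$ bookkeeping in the third paragraph — identifying exactly the collapse at $r=m/2$ and checking that no further accidental isotopisms occur — whereas the two counting facts (multiples of $2^{e}$ in a truncated interval, and the ``$2+(\text{number of nontrivial }\sigma)$'' tally) are routine.
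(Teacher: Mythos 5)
The paper states this corollary without a written proof, so your argument can only be measured against the supporting results it is supposed to follow from. Your overall strategy (count normalized parameter pairs $(k,\sigma)$, certify non-isotopy via Theorem~\ref{th_P_strong_isotopism}, Corollary~\ref{cr_strong_iso_Pk_sigma} and the nuclei, then add the extra planar function of $\mathbb{S}_k$ from Theorem~\ref{th_P_time_r}) is the natural and intended one, and your count of admissible $k$ (multiples of $2^e$ in $[1,\lfloor m/2\rfloor]$, hence $\lfloor\mu/2\rfloor$ of them) is correct. The genuine gap is in your third paragraph, precisely at the point you yourself flag as ``the single point needing a genuine computation'': the claim that for even $m$ the value $r=m/2$ collapses onto a class already obtained. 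You never perform that computation, and more seriously the claim is incompatible with the results you invoke one sentence later. The normalization in the paper reduces $r$ modulo $r\sim m-r$ to the range $0\le r\le\lfloor m/2\rfloor$, in which $r=m/2$ is a fixed point and survives as its own parameter; Theorem~\ref{th_P_strong_isotopism} and Corollary~\ref{cr_strong_iso_Pk_sigma} (whose hypotheses explicitly allow $r=\lfloor m/2\rfloor$, and $x^{p^{m/2}}$ is a nontrivial automorphism) then assert that $\mathbb{P}_{k,x^{p^{m/2}}}$ is \emph{not} isotopic to any other $\mathbb{P}_{s,\tau}$ in the range. So either no collapse occurs, in which case the number of $\sigma$-classes per $k$ is $\lfloor m/2\rfloor+1=\lceil m/2\rceil+1$ for even $m$ and your argument does not reach the stated formula, or a collapse does occur, in which case the cited non-isotopy results cannot be used as stated to certify your lower bound. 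You cannot have both. (For odd $m$, where $\lfloor m/2\rfloor+1=\lceil m/2\rceil$, this issue disappears and your count of part (1) is sound.)

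A secondary gap concerns exactness in part (2). That each $\mathbb{S}_{k,\sigma}$ with nontrivial $\sigma$ yields a single equivalence class is fine: the middle nucleus consists of the elements $(c,0)$ and the computation before Corollary~\ref{cr_strong_iso_Pk_sigma} shows every twist is equivalent to the original. But for $\mathbb{S}_k$ the middle nucleus is $\mathbb{F}_{p^{2l}}$, and after normalization the nontrivial twists form a whole family indexed by $(c,1)$ with $c\in\mathbb{F}_{p^l}$, as in (\ref{eq_Mi_multi}). Theorem~\ref{th_P_time_r} only separates the twists with $c^2-\alpha$ a non-square from the presemifields $\mathbb{P}_s$; nothing you cite (nor anything proved in the paper) shows that all these twists, including those with $c^2-\alpha$ a square, fall into a \emph{single} equivalence class of planar functions. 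Without that, ``$\mathbb{S}_k$ contributes exactly two inequivalent planar functions'' is only a lower bound, and your appeal to Theorem~\ref{th_CoulterHenderson} establishes that every further class must come from such a twist but not that there is only one of them. Your argument for why the twist class is disjoint from the classes of the $\mathbb{P}_{s,\tau}$ (it lies in the isotopism class of $\mathbb{S}_k$, which is distinct from theirs) is correct as far as it goes, but it presupposes the non-isotopy of $\mathbb{S}_k$ and $\mathbb{S}_{k'}$, which by Theorem~\ref{th_CoulterHenderson} again requires controlling all twists, not only the well-behaved ones.
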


\section{$\mathbb{S}_{k,\sigma}$ is a new family}
In the previous sections, we showed that our new family looks like a combination of Dickson semifields and generalized twisted fields, and $\mathbb{S}_{k}$ behaves quite different from $\mathbb{S}_{k,\sigma}$ with nontrivial $\sigma$. Therefore, we suggest to divide it into two families, according to whether $\sigma$ is trivial, as the case of finite fields and Dickson semifields.

When we take them as two familes, then one natural question is:

\emph{Do $\mathbb{S}_{k}$ and $\mathbb{S}_{k,\sigma}$ contain new semifields compared with the other known families? }

In fact, for some cases, we can prove that $\mathbb{S}_{k}$ is contained in the family discovered by Budaghyan and Helleseth \cite{B-H2008,BudaghyanHelleseth2010}, which can be rewritten in the following form:
\begin{theorem}[Kyureghyan and Bierbrauer \cite{BierbrauerKyureghyan2010}]\label{th_BHB_Gohar}
    Let $p$ be an odd prime number, $q=p^m$, $n=2m$ and integers $i$, $j$ such that $s=i-j$. Then the mapping $M_s: \mathbb{F}_{p^n} \rightarrow \mathbb{F}_{p^n}$ given by
    $$M_s(x)=x^{p^m+1}+\omega \mathrm{tr}_{q^2/q}(\beta x^{p^i+p^j}),~~~~~~~~i\ge j\ge 0,$$
    is planar if and only if all the following conditions are fulfilled:
    \begin{enumerate}
      \item s=0 or $\nu (s)\neq \nu(m)$,
      \item $\omega \in \mathbb{F}_{q^2}\setminus \mathbb{F}_q$,
      \item $\beta$ is a non-square in $\mathbb{F}_{q^2}$,
    \end{enumerate}
    where $s=2^{\nu(s)}s_1$ with $s_1$ an odd integer.
\end{theorem}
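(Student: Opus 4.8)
The plan is to apply the standard bilinear-form criterion for planarity of Dembowski--Ostrom polynomials and to reduce the statement to two elementary equations over $\mathbb{F}_{q^2}$, each of which is governed by a greatest common divisor of the shape $\gcd(p^{a}+1,p^{b}-1)$. First I would record that $M_s(x)=x^{q+1}+\omega\,\mathrm{tr}_{q^2/q}(\beta x^{p^i+p^j})$ is a DO polynomial, since $\mathrm{tr}_{q^2/q}(z)=z+z^{q}$ rewrites it as a sum of terms $x^{p^{a}+p^{b}}$; hence $M_s$ is planar if and only if its associated symmetric bilinear form $B(x,a):=M_s(x+a)-M_s(x)-M_s(a)$ is nondegenerate in the strong sense that $B(x,a)=0$ together with $a\neq0$ forces $x=0$. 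A short computation gives
\[
 B(x,a)=\mathrm{tr}_{q^2/q}(a^{q}x)+\omega\,\mathrm{tr}_{q^2/q}\!\bigl(\beta\,(x^{p^i}a^{p^j}+x^{p^j}a^{p^i})\bigr),
\]
and both summands have their coefficients in $\mathbb{F}_q$. If $\omega\in\mathbb{F}_q$ (in particular if $\omega=0$), the image of $x\mapsto B(x,a)$ lies in $\mathbb{F}_q\subsetneq\mathbb{F}_{q^2}$ and $M_s$ is not planar; this gives the necessity of condition~(2). If $\omega\notin\mathbb{F}_q$, then $\{1,\omega\}$ is an $\mathbb{F}_q$-basis of $\mathbb{F}_{q^2}$, so $B(x,a)=0$ becomes equivalent to the simultaneous vanishing of the two $\mathbb{F}_q$-valued quantities $\mathrm{tr}_{q^2/q}(a^{q}x)$ and $\mathrm{tr}_{q^2/q}(\beta(x^{p^i}a^{p^j}+x^{p^j}a^{p^i}))$.

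Next I would parametrise the first of these vanishing conditions. For $a\neq0$, dividing by the norm $a^{q+1}\in\mathbb{F}_q^{*}$ shows that $\mathrm{tr}_{q^2/q}(a^{q}x)=0$ holds exactly for $x=au$ with $u^{q}=-u$; the set of such $u$ is a one-dimensional $\mathbb{F}_q$-subspace $\zeta\mathbb{F}_q$, where $\zeta$ is any fixed nonzero element with $\zeta^{q}=-\zeta$. Substituting $x=au$ in the second condition and using $(au)^{p^i}a^{p^j}+(au)^{p^j}a^{p^i}=a^{p^i+p^j}(u^{p^i}+u^{p^j})$, the factor $a^{p^i+p^j}$ separates off, so $M_s$ is planar if and only if, for every nonzero $u$ with $u^{q}=-u$, there is \emph{no} $a\neq0$ with $\mathrm{tr}_{q^2/q}\bigl(\beta\,a^{p^i+p^j}(u^{p^i}+u^{p^j})\bigr)=0$. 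I would then split the analysis according to whether $v:=u^{p^i}+u^{p^j}$ vanishes.

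If $v=0$ for some nonzero $u$ with $u^{q}=-u$, the trace vanishes for every $a$ and $M_s$ is not planar; thus planarity forces $u^{p^i}+u^{p^j}\neq0$ for all such $u$. Writing $u=\zeta t$ with $t\in\mathbb{F}_q^{*}$ and using $\zeta^{q-1}=-1$ (whence $\zeta^{p^{s}-1}\in\mathbb{F}_q^{*}$), the equation $u^{p^i}+u^{p^j}=0$ collapses to $u^{p^{s}-1}=-1$, i.e.\ to whether $-1$ lies in the coset $\zeta^{p^{s}-1}\{t^{p^{s}-1}:t\in\mathbb{F}_q^{*}\}$ of the group of $(p^{s}-1)$-st powers; raising to the power $(q-1)/(p^{\gcd(s,m)}-1)$ and tracking $2$-adic valuations turns this obstruction into the scalar $(-1)^{(m+s)/\gcd(s,m)}$, which is $\neq1$ precisely when $\nu(s)\neq\nu(m)$ (the case $s=0$ being immediate), giving condition~(1). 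In the case $v\neq0$ one checks $v^{q}=-v$, so $v^{q-1}=-1$, and then $\mathrm{tr}_{q^2/q}(\beta a^{p^i+p^j}v)=0$, i.e.\ $(\beta a^{p^i+p^j}v)^{q-1}=-1$, simplifies to $\beta\,a^{p^i+p^j}\in\mathbb{F}_q^{*}$, a condition independent of $u$. Hence in this case $M_s$ is planar if and only if $\beta\notin\mathbb{F}_q^{*}\!\cdot H$, where $H$ is the image of $a\mapsto a^{p^i+p^j}$, i.e.\ the subgroup of $(p^{s}+1)$-st powers of $\mathbb{F}_{q^2}^{*}$; computing $|H|$ and $|\mathbb{F}_q^{*}\cap H|$ from $\gcd(p^{s}+1,p^{2m}-1)$, $\gcd(p^{s}+1,p^{m}-1)$ and $\gcd(p^{s}+1,p^{m}+1)$, and using condition~(1), shows that $\mathbb{F}_q^{*}\!\cdot H$ is exactly the set of nonzero squares of $\mathbb{F}_{q^2}$, so the condition is precisely that $\beta$ is a non-square, which is~(3). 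Assembling the three cases proves that planarity is equivalent to (1), (2) and (3) holding simultaneously.

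The step I expect to be the main obstacle is the elementary number theory underlying the last two cases: one must pin down $\gcd(p^{s}+1,p^{2m}-1)$ and the companion gcd's $\gcd(p^{s}+1,p^{m}\pm1)$ in terms of $\nu(s)$ versus $\nu(m)$, and then verify that $\mathbb{F}_q^{*}\!\cdot H$ coincides with the group of nonzero squares of $\mathbb{F}_{q^2}$ in every admissible case. Facts of exactly this flavour are already in play in the excerpt (Proposition~\ref{pro_gcd(p^m-1,p^k+1)}), so the required lemmas are standard; everything else is bilinear algebra over $\mathbb{F}_{q^2}$, resting on the fact that the two components of $B(x,a)$ decouple along the basis $\{1,\omega\}$.
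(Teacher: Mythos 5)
The paper contains no proof of this statement: Theorem~\ref{th_BHB_Gohar} is imported verbatim from \cite{BierbrauerKyureghyan2010} and used as a black box, so there is no in-paper argument to compare yours against. Judged on its own, your proposal is correct and follows the standard route for such results. The reduction of planarity to the triviality of the kernel of $x\mapsto B(x,a)$, the decoupling of the two $\mathbb{F}_q$-valued components along the basis $\{1,\omega\}$ (which gives the necessity of condition (2) at once), the parametrisation of the first kernel condition by $x=au$ with $u^q=-u$, and the case split on $v=u^{p^i}+u^{p^j}$ are all sound. The two computations you defer also check out: the solvability of $u^{p^s-1}=-1$ on the line $\zeta\mathbb{F}_q$ is governed by the parity of $(m+s)/\gcd(s,m)$, which is even exactly when $\nu(s)=\nu(m)$, yielding condition (1); and, given condition (1), one has $\gcd(p^s+1,q^2-1)=2$ when $\nu(s)>\nu(m)$ or $s=0$, while for $\nu(s)<\nu(m)$ it equals $p^{\gcd(s,m)}+1$ with $\gcd(p^{\gcd(s,m)}+1,\,q+1)=2$ because $m/\gcd(s,m)$ is then even — in every case $\mathbb{F}_q^{*}\cdot H$ is exactly the group of nonzero squares of $\mathbb{F}_{q^2}^{*}$, so your last condition is equivalent to (3). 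The only caveat is that these two steps are left as sketches; in particular, the coset argument for condition (1) as you describe it gives only the necessity of the parity obstruction, and a complete proof must also supply the converse (e.g.\ via the index computation $\gcd(p^s-1,q-1)=p^{\gcd(s,m)}-1$). No new idea is needed to fill these in, so I see no genuine gap.
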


Since $u^{p^m+1}\in\mathbb{F}_{p^m}$, for any $u\in \mathbb{F}_{p^{2m}}$, different choices of $\omega$ give equivalent planar functions. Furthermore, it is easy to see that different $(i,j)$ with the same $s$ also lead to equivalent $M_s$, so we redefine $M_s(x)$ as follows:
\begin{equation}\label{eq_BHB-2}
    M_s(x)=x^{p^m+1}+\omega \mathrm{tr}_{q^2/q}(\beta x^{p^s+1}),
\end{equation}
where $s=0$ or $\nu(s)\neq \nu(m)$, and $\beta$ and $\omega$ are the same as in Theorem \ref{th_BHB_Gohar}.

The following lemma can be found in \cite{Coulter1998}, \cite{DraperHou2007} and \cite{Helleseth2007}:
\begin{lemma}\label{le_gcd_pj+1_pn-1}
    For an odd prime $p$,
    $$\gcd(p^j+1, p^n-1)=\left\{
                           \begin{array}{ll}
                             p^{\gcd(j,n)}+1, & \hbox{if $\nu(j)<\nu(n)$;} \\
                             2, & \hbox{otherwise.}
                           \end{array}
                         \right.
    $$
\end{lemma}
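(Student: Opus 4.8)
The plan is to express $\gcd(p^j+1, p^n-1)$ in terms of orders of $p$ modulo a divisor and then split into cases according to the $2$-adic valuations $\nu(j)$ and $\nu(n)$. First I would note the elementary fact that for any prime power $q=p^d$ one has $q^j+1 \mid q^{2j}-1$, hence every prime $\ell$ dividing both $p^j+1$ and $p^n-1$ must satisfy $\mathrm{ord}_\ell(p) \mid 2j$ but $\mathrm{ord}_\ell(p) \nmid j$; equivalently $\mathrm{ord}_\ell(p)$ is of the form $2^{\nu(j)+1}\cdot t$ with $t\mid j_1$ (writing $j=2^{\nu(j)}j_1$). Combined with $\mathrm{ord}_\ell(p)\mid n$, this forces $2^{\nu(j)+1}\mid n$, i.e. $\nu(j)<\nu(n)$, as a \emph{necessary} condition for an odd common prime divisor to exist. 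This already handles the ``otherwise'' case: when $\nu(j)\ge\nu(n)$ there is no odd common factor, and since $p$ is odd both $p^j+1$ and $p^n-1$ are even, so the gcd is exactly $2$.

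For the main case $\nu(j)<\nu(n)$, set $g=\gcd(j,n)$ and I would show $\gcd(p^j+1,p^n-1)=p^g+1$ by two inclusions. For divisibility of $p^g+1$ into both: since $\nu(j)<\nu(n)$, the integer $j/g$ is odd while $n/g$ is even, so $p^g+1\mid p^j+1$ (an odd number of terms in the factorization $p^j+1 = (p^g+1)(p^{j-g}-p^{j-2g}+\cdots+1)$) and $p^g+1\mid p^{2g}-1\mid p^n-1$. For the reverse, let $D=\gcd(p^j+1,p^n-1)$ and use $D\mid p^{2j}-1$ together with $D\mid p^n-1$ to get $D\mid p^{\gcd(2j,n)}-1$; since $\nu(j)<\nu(n)$ one checks $\gcd(2j,n)=2g$, so $D\mid p^{2g}-1=(p^g-1)(p^g+1)$. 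Finally $\gcd(p^j+1,p^g-1)$ divides $\gcd(p^g+1,p^g-1)=2$ (using $g\mid j$ and $j/g$ odd again), and since $p^g+1$ is even while $(p^g-1)/2$ and $p^g+1$ share no odd factor, one concludes $D\mid p^g+1$. The two divisibilities give $D=p^g+1$.

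I expect the main obstacle to be the bookkeeping with the $2$-adic valuations: the key arithmetic lemma one repeatedly needs is that $p^a+1 \mid p^b+1$ exactly when $b/a$ is an odd integer, and that $\gcd(2j,n)=2\gcd(j,n)$ precisely when $\nu(j)<\nu(n)$. Both are standard but must be invoked carefully and in the right direction, since a sign slip in the valuation inequality swaps the two cases of the statement. Everything else — the factorizations of $p^a\pm1$ and the fact that consecutive such expressions have gcd dividing $2$ in odd characteristic — is routine, so once the valuation arithmetic is pinned down the proof assembles quickly. (Since the paper cites \cite{Coulter1998, DraperHou2007, Helleseth2007} for this lemma, it is also legitimate simply to refer to those sources, but the above is the self-contained argument I would write out.)
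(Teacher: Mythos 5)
The paper gives no proof of this lemma at all: it is quoted verbatim from the cited sources \cite{Coulter1998}, \cite{DraperHou2007}, \cite{Helleseth2007}, so there is no in-paper argument to compare yours against. Your self-contained proof is correct and follows the standard order-theoretic route: the observation that an \emph{odd} prime $\ell$ dividing $p^j+1$ forces $\nu(\mathrm{ord}_\ell(p))=\nu(j)+1$, hence $\nu(n)>\nu(j)$, correctly disposes of the ``otherwise'' case up to the power of $2$; and the two divisibilities $p^{\gcd(j,n)}+1\mid\gcd(p^j+1,p^n-1)\mid p^{2\gcd(j,n)}-1$ pin down the main case. Two spots are compressed and deserve one more line each. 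First, in the ``otherwise'' case you still must rule out $4\mid\gcd$: note that $4\mid p^j+1$ forces $j$ odd and $p\equiv 3\pmod 4$, while $4\mid p^n-1$ then forces $n$ even, which together give $\nu(j)=0<\nu(n)$ and so cannot occur when $\nu(j)\ge\nu(n)$. Second, in extracting $D\mid p^g+1$ from $D\mid(p^g-1)(p^g+1)$, the odd part of $D$ is handled by $\gcd(D,p^g-1)\mid 2$, but for the $2$-part you should say explicitly that $\nu_2(p^j+1)=\nu_2(p^g+1)$ because $p^j+1=(p^g+1)\sum_{i=0}^{j/g-1}(-p^g)^{i}$ with the second factor odd (it is a sum of an odd number of odd terms); your phrase about $(p^g-1)/2$ sharing no odd factor with $p^g+1$ does not quite deliver this. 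Both repairs are routine, so the proof stands.
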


When $m$ is odd, there exist $\omega\in \mathbb{F}_{p^2}\setminus\mathbb{F}_p$, such that $\omega+\omega^{p^m}=0$ and $u\in\mathbb{F}_{p^{2m}}$ can be written as $a+b\omega$, where $a,b\in \mathbb{F}_{p^m}$. Furthermore, by Lemma \ref{le_gcd_pj+1_pn-1}, since $m$ is odd and $\nu(s)\neq \nu(m)$, we have $\gcd(p^s+1, p^{2m}-1)=2$, so any different non-square $\beta$ lead to equivalent $M_s(x)$ and we assume that $\beta=\omega^{-1}$.  Then we denote $\odot$ to be the multiplication defined by (\ref{eq_BHB-2}), denote $u,v \in\mathbb{F}_{p^{2m}}$ respectively by $a+b\omega$ and $c+d\omega$, and we have
\begin{align*}
    u\odot v=&(a+b\omega)\odot(c+d\omega)\\
    =&(a+b\omega^{p^m})(c+d\omega)+(a+b\omega)(c+d\omega^{p^m})+\omega \mathrm{tr}_{q^2/q}(\omega^{-1} x\circ_s Y)\\
    =&2ac-2bd\omega^2+((a+b\omega)\circ_s(c+d\omega)-(a-b\omega)\circ_s(c-d\omega))\\
    =&2(ac-bd\omega^2)+2(a\circ_s (d\omega)+(b\omega)\circ_s c)\\
    =&2(ac-bd\omega^2)+2(a\circ_s d+ b\circ_s c)\omega.
\end{align*}
The last equality sign holds, because $\omega^{p^s-1}=1$, since that $s$ must be even.
Moreover, the corresponding planar function is equivalent to
$$M_s(x,y)=(x^{p^s}y+xy^{p^s},x^2-\omega^2y^2),$$
which is equivalent to (\ref{eq_mola_PNF_2}) with $c=0$, when $-1$ is a square in $\mathbb{F}_{p^m}$.

However, on the other hand, since we showed above that when $m$ is odd and $-1$ is a square, the Budaghyan-Helleseth semifield is isotopic to $\mathbb{S}_k$, it can not be isotopic to $\mathbb{S}_{k,\sigma}$ with non-trivial $\sigma$ by Corollary \ref{cr_strong_iso_Pk_sigma}. Furthermore, by the middle and left nucleus of $\mathbb{S}_{k,\sigma}$, we know that it is not isotopic with Albert and Dickson semifields. Moreover, since $\mathbb{S}_{k,\sigma}$ is defined over $p^{2m}$ with any odd $p$, it must cover some semifields which are not included by the known semifields except for the Lunardon-Marino-Polverino-Tormbetti semifields generalized  by Bierbrauer in \cite{Bierbrauer2009-3} (abbreviated to LMPTB). Nevertheless, for given $p$ and $m$, LMPTB has only one element, but by Corollary \ref{cr_count_semi} our family $\mathbb{S}_{k,\sigma}$ has more when $m\ge 5$. Therefore, to sum up,
\begin{theorem}
    When $m\ge 5$ is odd and $-1$ is a square, then $\mathbb{S}_{k,\sigma}$ with non-trivial $\sigma$ contains semifields which are not isotopic with any known ones.
\end{theorem}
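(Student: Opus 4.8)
The plan is to argue by elimination. I would go through the list of known commutative semifields of order $p^{2m}$ from Section~1 and show that a semifield $\mathbb{S}_{k,\sigma}$ with non-trivial $\sigma$ is isotopic to none of them except possibly the Lunardon--Marino--Polverino--Trombetti semifield generalized by Bierbrauer (LMPTB), and then finish by counting. Since isotopy of (pre)semifields is a property of the associated plane, I may pass freely between $\mathbb{P}_{k,\sigma}$ and $\mathbb{S}_{k,\sigma}$ throughout.

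The dimension and characteristic restrictions dispose of most of the list. As $m\ge 5$ is odd, $n=2m$ is even with $n\equiv 2\pmod 4$, so every family existing only for $n$ odd (Coulter--Matthews/Ding--Yuan, Cohen--Ganley, Ganley) or only for $4\mid n$ (Bierbrauer) is excluded, as are the sporadic examples over $3^8$, $3^5$ and $5^5$, whose orders are never $p^{2m}$ with $m\ge 5$. The one sporadic order that can occur is $3^{10}$ (Penttila--Williams, with $p=3$, $m=5$), whose middle nucleus is $\mathbb{F}_{3^5}$; but by Theorem~\ref{th_S_Nm} the middle nucleus of $\mathbb{S}_{k,\sigma}$ with non-trivial $\sigma$ is $\mathbb{F}_{p^{l}}$ with $l=\gcd(m,k)\le\lfloor m/2\rfloor<m$, so these are not isotopic. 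The nucleus data of Theorems~\ref{th_S_Nm} and \ref{th_S_Nl} also separates $\mathbb{S}_{k,\sigma}$ from the finite field (all nuclei $\mathbb{F}_{p^{2m}}$), from the Dickson semifields (middle nucleus $\mathbb{F}_{p^m}$), and, by comparing nucleus sizes, from the Albert commutative twisted fields of order $p^{2m}$. For the Budaghyan--Helleseth semifields (which, by the remark after the list, include the Zha--Wang rediscovery), the computation preceding the theorem shows that, for odd $m$ with $-1$ a square in $\mathbb{F}_{p^m}$, every such semifield of order $p^{2m}$ is isotopic to some $\mathbb{S}_k$, i.e.\ to an $\mathbb{S}_{k,\sigma}$ with \emph{trivial} $\sigma$; Corollary~\ref{cr_strong_iso_Pk_sigma} applied with $\tau$ the identity then shows that no $\mathbb{S}_{k',\sigma'}$ with non-trivial $\sigma'$ is isotopic to any $\mathbb{S}_s$, hence to any Budaghyan--Helleseth semifield. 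The same device handles the Zha--Kyureghyan--Wang semifield wherever it exists over $\mathbb{F}_{p^{2m}}$, leaving only LMPTB.

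For the counting, I would use that LMPTB has exactly one isotopy class for each pair $(p,m)$, while the presemifields $\mathbb{P}_{k,\sigma}$ with $1\le k,r\le\lfloor m/2\rfloor$ and $\sigma(x)=x^{p^r}$ (so $\sigma$ non-trivial) are pairwise non-isotopic by Corollary~\ref{cr_strong_iso_Pk_sigma}; for odd $m\ge 5$ this yields $\lfloor m/2\rfloor^{2}\ge 4$ pairwise non-isotopic semifields with non-trivial $\sigma$. By the previous paragraph each of them fails to be isotopic to every known commutative semifield other than LMPTB, and at most one of them can be isotopic to the single LMPTB semifield. Hence at least three---in particular at least one---of the $\mathbb{S}_{k,\sigma}$ with non-trivial $\sigma$ is isotopic to no known commutative semifield. (One may also quote Corollary~\ref{cr_count_semi} directly and subtract the at most $\lfloor m/2\rfloor$ isotopy classes coming from trivial $\sigma$.)

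The main obstacle is making the second paragraph fully rigorous instead of relying on the informal claim that the family ``must cover some semifields which are not included by the known ones except for LMPTB''. The families restricted by dimension are easy, but one should check that no residual coincidence occurs --- in particular that the Zha--Kyureghyan--Wang semifield, when $3\mid m$, is genuinely reached by the reduction-to-$\mathbb{S}_k$ argument or is separated by a nucleus invariant, and that the Albert twisted fields of order $p^{2m}$ never share the nucleus pattern of $\mathbb{S}_{k,\sigma}$. If a case-by-case check is undesirable, the counting absorbs it: the number of pairwise non-isotopic $\mathbb{S}_{k,\sigma}$ with non-trivial $\sigma$ grows like $(m/2)^{2}$, while each of the finitely many remaining known families contributes a bounded number of isotopy classes for fixed $(p,m)$, so a new semifield is forced once $m$ is large, and the finitely many small odd $m\ge 5$ can be settled by direct computation. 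I would expect the clean statement for all odd $m\ge 5$ to come down exactly to identifying which known families survive the dimension and nucleus tests, which the paper does by naming only LMPTB.
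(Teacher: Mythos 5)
Your elimination-plus-counting argument is essentially the paper's own proof: the paper likewise disposes of the dimension- and characteristic-restricted families and the sporadics, separates Albert and Dickson via the nuclei of Theorems~\ref{th_S_Nm} and \ref{th_S_Nl}, uses the identification of the Budaghyan--Helleseth semifield with $\mathbb{S}_k$ (trivial $\sigma$) together with Corollary~\ref{cr_strong_iso_Pk_sigma}, and concludes by weighing the single LMPTB isotopy class against the count from Corollary~\ref{cr_count_semi}. If anything you are more explicit than the paper about the residual Zha--Kyureghyan--Wang case when $3\mid m$, which the paper absorbs into the same informal ``must cover'' claim that you flag.
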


\section{APN functions with the similar form}

In \cite{Zha2009}, Zha, Kyureghyan and Wang showed for the first time that some planar functions can be derived from quadratic APN functions. Similar constructions for planar function can also be found in \cite{Bierbrauer2009-1,BierBrauer2009-2,B-H2008,Zha2009-1}. One natural question is the following: Is it possible to get some new APN functions from known planar ones?

In fact, from our new presemifields family, we can derive a similar family of APN functions on $\mathbb{F}_{2^{2m}}$:
\begin{theorem}\label{th_Mola_APN}
    Let $m\ge 2$ be even integer, and k be a integer such that $\gcd(k,m)=1$. Define a function $f$ on $\mathbb{F}_{2^{2m}}$ as follows,
    \begin{equation*}
        f(x,y)=(x^{2^k+1}+\alpha y^{(2^k+1)\sigma}, xy),
    \end{equation*}
    where $\alpha\in\mathbb{F}_{2^m}$, $\alpha\neq 0$ and $\sigma\in\mathrm{Aut}(\mathbb{F}_{2^m})$.
    Then $f$ is an APN function, if and only if $\alpha$ cannot be written as $a^{2^k+1}(t^{2^k}+t)^{1-\sigma}$, where $a,t\in\mathbb{F}_{2^m}$.
\end{theorem}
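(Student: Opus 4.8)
The plan is to use the standard criterion for quadratic (Dembowski--Ostrom) functions in characteristic $2$: writing $F=\mathbb{F}_{2^m}$ and identifying $\mathbb{F}_{2^{2m}}$ with $F^2$, the map $f$ is APN if and only if for every nonzero $(a,b)\in F^2$ the associated symmetric bilinear map
$$B\big((x,y),(a,b)\big):=f\big((x,y)+(a,b)\big)+f(x,y)+f(a,b)$$
vanishes only at $(x,y)=(0,0)$ and $(x,y)=(a,b)$ (the second being forced to be a zero since $B(v,v)=f(0)=0$). First I would expand $B$. Using $\circ_k$ from Theorem~\ref{th_Mola_presemifield}, the characteristic-$2$ identity $(x+a)^{2^k+1}+x^{2^k+1}+a^{2^k+1}=x\circ_k a$, and the fact that $\sigma$ commutes with the power maps and with addition, one obtains
$$B\big((x,y),(a,b)\big)=\big(\,x\circ_k a+\alpha\,(y\circ_k b)^{\sigma}\,,\; xb+ay\,\big),$$
since $(x+a)(y+b)+xy+ab=xb+ay$. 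Thus APN-ness amounts to: for all nonzero $(a,b)$, the only common zeros of $x\circ_k a+\alpha(y\circ_k b)^\sigma=0$ and $xb+ay=0$ are the two expected ones.

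Next I would treat the degenerate cases. If $b=0$, $a\neq0$, the second equation forces $y=0$ and the first becomes $x\circ_k a=a(x^{2^k}+a^{2^k-1}x)=0$; since $\gcd(2^k-1,2^m-1)=2^{\gcd(k,m)}-1=1$, this gives $x\in\{0,a\}$, so no violation occurs, and the case $a=0$, $b\neq0$ is symmetric (using $\alpha\neq0$). In the generic case $a\neq0$, $b\neq0$ I would solve the second equation as $y=bx/a$ and substitute. The computational heart is the scaling identity
$$\frac{bx}{a}\circ_k b=\Big(\frac{b}{a}\Big)^{2^k+1}\big(x\circ_k a\big),$$
so that, writing $c:=x\circ_k a\in F$, the first equation collapses to $c+\beta c^{\sigma}=0$ with $\beta:=\alpha(b/a)^{(2^k+1)\sigma}\neq0$. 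When $c=0$ we again get $x\in\{0,a\}$, i.e. the two expected solutions; hence $f$ fails to be APN exactly when, for some nonzero $a,b$, there is a solution with $c\neq0$.

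Finally I would unwind this residual condition. Putting $x=au$ gives $c=a^{2^k+1}(u^{2^k}+u)$, and substituting into $c=\beta c^{\sigma}$ and cancelling reduces the equation to
$$\alpha=\Big(\frac{a}{b^{\sigma}}\Big)^{2^k+1}\big(u^{2^k}+u\big)^{1-\sigma}.$$
As $a$ runs over $F$ with $b$ fixed, $a/b^{\sigma}$ runs over all of $F$; moreover any $u\in\mathbb{F}_2$ makes the right-hand side $0\neq\alpha$, so the genericity restrictions may be dropped. Therefore $f$ is \emph{not} APN precisely when $\alpha=a^{2^k+1}(t^{2^k}+t)^{1-\sigma}$ for some $a,t\in F$, which is the asserted equivalence. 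I expect the only delicate points to be the bookkeeping in the two substitutions --- verifying the $\circ_k$-scaling identity and the final rearrangement --- and the careful handling of the degenerate values of $(a,b)$ and of $u\in\mathbb{F}_2$; the overall argument is a direct kernel computation with no hidden obstruction.
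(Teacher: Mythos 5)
Your proposal is correct and follows essentially the same route as the paper: reduce APN-ness of the quadratic map to the kernel of the associated bilinear form $(x,y)\mapsto\bigl(x\circ_k a+\alpha(y\circ_k b)^{\sigma},\,xb+ay\bigr)$, dispose of the cases $b=0$ (and $a=0$) via the Albert-type multiplication, and in the generic case substitute $y=bx/a$ (equivalently the paper's $x=ta$, $t=y/b$) to arrive at the identical condition $\alpha=(a/b^{\sigma})^{2^k+1}(t^{2^k}+t)^{1-\sigma}$. The only cosmetic difference is your intermediate scaling identity for $\circ_k$, which the paper absorbs into a direct substitution.
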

\begin{proof}
    Since $f$ is quadratic, we only have to prove that for each $(a,b)\neq 0$, equations
    $$\left\{
    \begin{array}{l}
        x\circ_k a+\alpha (y\circ_k b)^\sigma=0 \\
        ay+bx=0
      \end{array}\right.
    $$
    have at most two roots, where $x\circ_k y=x^{p^k}y+y^{p^k}x$.

    If $b=0$, then we have $x\circ_k a=0$ and $ay=0$, which means $y=0$, $x=0$ or $a$, since $x^{p^k+1}$ is APN function on $\mathbb{F}_{2^m}$ and $a\neq 0$.

    If $b\neq 0$, then $x =\frac{ay}{b}=t\cdot a$. where $t :=\frac{y}{b}$. Hence, we have
    $$(at)\circ_k a+\alpha ((bt)\circ_k b)^\sigma=0,$$
    namely,
    $$(t^{2^k}+t)a^{2^k+1}+\alpha(t^{2^k}+t)^\sigma b^{(2^k+1)\sigma}=0.$$
    If $t^{2^k}+t=0$, then $x=y=0$ or $y=b$, $x=a$; If $t^{2^k}+t\neq 0$, then we have
    $$\alpha=\left(\frac{a}{b^\sigma}\right)^{2^k+1}(t^{2^k}+t)^{1-\sigma},$$
    from which we prove the claim. \qed
\end{proof}

Moreover, let us further consider the condition of Theorem \ref{th_Mola_APN}. For even $m$, when $\gcd(k,m)=1$, we have $\gcd(2^k+1, 2^m-1)=3$ and $\gcd(2^i-1, 2^m-1)=2^{\gcd(i,m)}-1$. Hence, if $i$ is even and $\sigma(x)=x^{2^i}$, then $a^{2^k+1}(t^{2^k}+t)^{1-\sigma}$ is a cubic. Therefore, if $\alpha$ is not a cubic, then the condition in Theorem \ref{th_Mola_APN} holds.
\begin{corollary}\label{cr_APN}
    Let $m\ge 2$ be even integer, and $k$ be a integer such that $\gcd(k,m)=1$. Define a function $f$ on $\mathbb{F}_{2^{2m}}$ as follows:
    \begin{equation*}
        f(x,y)=(x^{2^k+1}+\alpha y^{(2^k+1){2^i}}, xy),
    \end{equation*}
    where the nonzero $\alpha\in\mathbb{F}_{2^m}$ is a non-cubic and $i$ is even, then $f$ is an APN function.
\end{corollary}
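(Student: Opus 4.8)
The plan is to obtain the corollary as a direct application of Theorem~\ref{th_Mola_APN}: with $\sigma(x)=x^{2^i}$ fixed, it suffices to show that the non-cubic $\alpha$ cannot be written in the form $a^{2^k+1}(t^{2^k}+t)^{1-\sigma}$ for any $a,t\in\mathbb{F}_{2^m}$. If $a=0$ or $t^{2^k}+t=0$ this expression vanishes, and those degenerate cases are precisely the ones already settled inside the proof of Theorem~\ref{th_Mola_APN}, so I would reduce at once to $a\neq 0$ and $t^{2^k}+t\neq 0$. Then the whole computation takes place in the cyclic group $\mathbb{F}_{2^m}^*$ of order $2^m-1$, which is divisible by $3$ because $m$ is even; hence ``being a cube'' is the well-defined condition of lying in the unique index-$3$ subgroup, and that subgroup is closed under multiplication and inversion.

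The argument then splits into two elementary facts, both recorded in the discussion preceding the statement. First, since $\gcd(k,m)=1$ and $m$ is even one has $\gcd(2^k+1,2^m-1)=3$, so the map $x\mapsto x^{2^k+1}$ on $\mathbb{F}_{2^m}^*$ has image exactly the subgroup of cubes; in particular $a^{2^k+1}$ is a cube. Second, writing $(t^{2^k}+t)^{1-\sigma}=(t^{2^k}+t)^{-(2^i-1)}$, I would note that the $(2^i-1)$-th powers in $\mathbb{F}_{2^m}^*$ form the subgroup of index $\gcd(2^i-1,2^m-1)=2^{\gcd(i,m)}-1$, and that this index is divisible by $3$: indeed $i$ even and $m$ even force $2\mid\gcd(i,m)$, whence $2^{\gcd(i,m)}\equiv 1\pmod 3$. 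Therefore every $(2^i-1)$-th power is a cube, and so is the inverse of one; in particular $(t^{2^k}+t)^{1-\sigma}$ is a cube.

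Combining the two facts, $a^{2^k+1}(t^{2^k}+t)^{1-\sigma}$ is a product of two cubes, hence a cube, so it can never equal the non-cubic $\alpha$. By Theorem~\ref{th_Mola_APN} this means $f$ is APN. I do not expect any real obstacle here: the content is purely the cyclotomic bookkeeping $\gcd(2^k+1,2^m-1)=3$ and $3\mid 2^{\gcd(i,m)}-1$, together with the harmless reduction to the non-degenerate case; the only point worth stating carefully is that ``cubic/non-cubic'' is meaningful here, which is exactly why the hypothesis ``$m$ even'' (forcing $3\mid 2^m-1$) is needed.
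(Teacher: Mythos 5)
Your argument is correct and is essentially the paper's own: the paper likewise deduces the corollary from Theorem~\ref{th_Mola_APN} by observing that $\gcd(2^k+1,2^m-1)=3$ makes $a^{2^k+1}$ a cube and that $\gcd(2^i-1,2^m-1)=2^{\gcd(i,m)}-1$ is divisible by $3$ for $i$ and $m$ even, so the product $a^{2^k+1}(t^{2^k}+t)^{1-\sigma}$ is always a cube. You merely spell out the cyclotomic bookkeeping and the degenerate cases more explicitly than the paper's one-line remark; no gap.
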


Let $m=4$, $k=1$ and $\alpha$ be a primitive element of $\mathbb{F}_{2^4}$, by Corollary \ref{cr_APN}, we can choose $i=0$ or $2$ to get two APN functions. By using MAGMA\cite{Magma}, it can be computed that, when $i=0$, the APN function is equivalent to the function No 2.1 in Table 10 in \cite{EdelPott2009}. However, when $i=2$, the $\Gamma$-rank of the APN function is $13642$, which does not occur in the list of known APN functions in \cite{EdelPott2009} (see \cite{EdelPott2009} for the $\Gamma$-rank). More concretely, the function
$$f(x,y)=(x^3+\alpha y^{12}, xy)$$
is a new APN function on $\mathbb{F}_{2^8}$.

\begin{remark}
    In \cite{Carlet_APN_2010}, Carlet presents some interesting constructions of APN functions, which include a similar result as Theorem \ref{th_Mola_APN} with $\sigma=\mathrm{id}$.
\end{remark}





\bibliographystyle{model1b-num-names}
\bibliography{ref-sc-8-8}







\end{document}